\gdef\SetFigFont#1#2#3#4#5{%
  \reset@font\fontsize{#1}{#2pt}%
  \fontfamily{#3}\fontseries{#4}\fontshape{#5}%
  \selectfont}%
\newtheoremstyle{plain}%
    {8pt plus2pt minus4pt}%
    {8pt plus2pt minus4pt}%
    {\itshape}%
    {}%
    {\bfseries\scshape}%
    {}%
    {6pt}% Space after theorem head
    {}%
\newtheoremstyle{remark}%
    {8pt plus2pt minus4pt}%
    {8pt plus2pt minus4pt}%
    {\upshape}% Body font
    {}%
    {\bfseries\scshape}%
    {}%
    {6pt}% Space after theorem head
    {}%
\theoremstyle{plain}
\newtheorem{thm}{Theorem}[section]
\newtheorem*{thm2}{Theorem}
\newtheorem{prop}[thm]{Proposition}
\newtheorem{ques}[thm]{Question}
\theoremstyle{remark}
\newcommand{\eR}{\hat{R}} 
\newcommand{\EX}[1]{\mathbb{E}(#1)}
\newcommand{\PR}[1]{\Pr\left(#1\right)} 
\newcommand{\bin}[1]{\text{Bin}\left(#1\right)}
\newcommand{\aas}{\emph{a.a.s.}}
\newcommand{\mG}{\mathcal{G}}
\newcommand{\mH}{\mathcal{H}}
\title{On the size-Ramsey number of hypergraphs}
\author{
{\Large{Andrzej Dudek}}\thanks{
\footnotesize {Department of Mathematics, Western~Michigan~University, Kalamazoo, MI 49008, \texttt{andrzej.dudek@wmich.edu}.
Supported in part by Simons Foundation Grant \#244712.}}
\and
{\Large{Steven La Fleur}}\thanks{
\footnotesize {Department of Mathematics and Computer Science, Emory University, Atlanta, GA 30322, \texttt{ slafleu@emory.edu.}}}
\and
{\Large{Dhruv Mubayi}}\thanks{
\footnotesize {Department of Mathematics, Statistics, and Computer Science, University of Illinois at Chicago, Chicago, IL 60607, \texttt{mubayi@math.uic.edu.} Supported in part by NSF grant DMS-1300138}}
\and
{\Large{Vojtech R\"odl}}\thanks{
\footnotesize {Department of Mathematics and Computer Science, Emory University, Atlanta, GA 30322, \texttt{rodl@mathcs.emory.edu.} Supported in part by NSF grants DMS-1301698 and DMS-1102086.}}
}
\date{\today}
\begin{document}
\maketitle		
		
\begin{abstract}
The size-Ramsey number of a graph $G$ is the minimum number of edges in a graph $H$ such that every 2-edge-coloring of $H$ yields a monochromatic copy of $G$. Size-Ramsey numbers of graphs have been studied for almost 40 years with particular focus on the case of trees and bounded degree graphs. 

We initiate the study of size-Ramsey numbers for $k$-uniform hypergraphs.  Analogous to the graph case, we consider the size-Ramsey number of  cliques, paths, trees, and bounded degree hypergraphs. Our results suggest that size-Ramsey numbers for hypergraphs are extremely difficult to determine, and many open problems remain.
\end{abstract}

%%%%%%%%%%%%%%%%%%%%%%%%%%%%%%%%%%%%%%%%%%%%%%%%%%%%%%
%%%%%%%%%%%%%%%%%%%%%%%%%%%%%%%%%%%%%%%%%%%%%%%%%%%%%%
%%%%%%%%%%%%%%%%%%%%%%%%%%%%%%%%%%%%%%%%%%%%%%%%%%%%%%
\section{Introduction}

Given graphs $G$ and $H$, say that $H \to G$
if every 2-edge-coloring of $H$ results in a monochromatic copy of $G$ in $H$. Using this notation, the Ramsey number $R(G)$ of $G$ is the minimum $n$ such that $K_n \to G$. Instead of minimizing the number of vertices, one can minimize the number of edges. Define the {\it size-Ramsey number} $\eR(G)$ of $G$ to be the minimum number of edges in a graph $H$ such that $H \to G$.
More formally, 
$$
\eR(G) = \min \{ |E(H)| :  H\to G \}.
$$
The study of size-Ramsey numbers was proposed by Erd\H{o}s, Faudree, Rousseau and Schelp~\cite{ErdFauRouSch78} in 1978.  By definition of $R(G)$, we have $K_{R(G)} \to G$.  Since the complete graph on $R(G)$ vertices has ${R(G)\choose 2}$ edges, we obtain the trivial bound
\begin{equation} \label{clique}
\eR(G) \le \binom{R(G)}{2}.
\end{equation}
Chv\'atal (see, \emph{e.g.},~\cite{ErdFauRouSch78}) showed that equality holds in (\ref{clique}) for complete graphs. In other words, 
\begin{equation}\label{eq:chvatal}
\eR(K_n) = \binom{R(K_n)}{2}.
\end{equation}One of the first problems in this area was to determine the size-Ramsey number of the $n$ vertex path $P_n$.  Answering a question of Erd\H{o}s~\cite{Erd81}, Beck~\cite{Beck83} showed that 
\begin{equation}\label{eq:beck}
\eR(P_n) = O(n).
\end{equation}
Since $\eR(G) \ge |E(G)|$ for any graph, Beck's result is sharp in order of magnitude. 
The linearity of the size-Ramsey number of paths was generalized to bounded degree trees by Friedman and Pippenger~\cite{FriPip87} and to cycles by Haxell, Kohayakawa and {\L}uczak~\cite{HaxKohLuc95}.
Beck~\cite{Beck90} asked whether $\eR(G)$ is always linear in the size of $G$ for graphs $G$ of bounded degree. This was settled in the negative by R\"{o}dl and Szemer\'edi~\cite{RodSze2000}, who proved that there are graphs of order $n$, maximum degree 3, and size-Ramsey number $\Omega(n(\log n)^{1/60})$. They also conjectured that for a fixed integer~$\Delta$ there is an $\varepsilon>0$ such that
\[
\Omega(n^{1+\varepsilon}) = \max_{G} \eR(G) = O(n^{2-\varepsilon}),
\]
where the maximum is taken over all graphs $G$ of order $n$ with maximum degree at most~$\Delta$. The upper bound was recently proved by Kohayakawa, R{\"o}dl, Schacht, and Szemer{\'e}di~\cite{KohRodSchSze11}. For further results about the size-Ramsey number see, \emph{e.g}, the survey paper of Faudree and Schelp~\cite{FauSch99}.

Somewhat surprisingly the size-Ramsey numbers have not been studied for hypergraphs, even though classical Ramsey numbers for hypergraphs have been studied extensively since the 1950's (see, e.g., \cite{ErdRad, ErdHajRad}), and more recently~\cite{ConFoxSud2010}. In this paper we initiate this study for $k$-uniform hypergraphs. A \emph{$k$-uniform hypergraph} $\mathcal{G}$ (\emph{$k$-graph} for short) on a vertex set $V(\mathcal{G})$ is a family of $k$-element subsets (called edges) of $V(\mathcal{G})$.  We write $E(\mathcal{G})$ for its edge set.
Given  $k$-graphs $\mathcal{G}$ and $\mathcal{H}$, say that  $\mH \to \mG$
if every 2-edge-coloring of $\mH$ results in a monochromatic copy of $\mG$ in $\mH$.
Define the \emph{size-Ramsey number} $\eR(\mG)$ of a $k$-graph $\mathcal{G}$ as
$$
\eR(\mathcal{G}) = \min \{ |E(\mathcal{H})| : \mathcal{H}\to \mathcal{G}  \}.
$$

\section{Results and open problems}
Motivated by extending the basic theory from graphs to hypergraphs, we prove results for cliques, trees, paths, and bounded degree hypergraphs.

\subsection{Cliques}
 For every $k$-graph $\mG$, we trivially have 
\[
\eR(\mathcal{G}) \le \binom{R(\mathcal{G})}{k},
\]
where $R(\mathcal{G})$ is the ordinary Ramsey number of $\mathcal{G}$.  Our first objective was to generalize (\ref{eq:chvatal}) to 3-graphs, which shows that equality holds for graphs. It is fairly easy to obtain a lower bound for $\eR(\mathcal{K}^{(3)}_n)$ that is quadratic in  
$R(\mathcal{K}^{(3)}_n)$, but we were only able to do slightly better.

\begin{thm} \label{cliquethm}
${\eR(\mathcal{K}_n^{(3)}) \ge \frac{n^2}{96}\binom{R(\mathcal{K}_n^{(3)})}{2}}$.
\end{thm}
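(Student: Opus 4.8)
Set $R=R(\mathcal{K}_n^{(3)})$ and fix a $2$-colouring $\chi_0$ of $E(\mathcal{K}_{R-1}^{(3)})$ with no monochromatic $\mathcal{K}_n^{(3)}$; one exists by the definition of $R$. Call a pair $\{u,v\}$ of vertices of $\mathcal{H}$ \emph{$n$-rich} if it lies in some copy of $\mathcal{K}_n^{(3)}$ in $\mathcal{H}$, and let $G^\ast$ be the graph on $V(\mathcal{H})$ whose edges are exactly the $n$-rich pairs. Since every $n$-rich pair has codegree at least $n-2$ in $\mathcal{H}$ and $\sum_{\{u,v\}}d_{\mathcal{H}}(u,v)=3|E(\mathcal{H})|$, it suffices to bound $|E(G^\ast)|$ below by a suitable constant multiple of $n\binom{R}{2}$; the plan is to get this from two chromatic-number estimates, the second of which is the delicate part.

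\emph{First estimate: $\chi(G^\ast)\ge R$, hence $|E(G^\ast)|\ge\binom{R}{2}$.} Let $\mathcal{H}\to\mathcal{K}_n^{(3)}$ and let $f\colon V(\mathcal{H})\to[R-1]$ be arbitrary. Colour $\{x,y,z\}\in E(\mathcal{H})$ by $\chi_0(\{f(x),f(y),f(z)\})$ if the three $f$-values are distinct, and red otherwise. This $2$-colouring contains a monochromatic copy $\mathcal{Q}$ of $\mathcal{K}_n^{(3)}$; were $f$ injective on $V(\mathcal{Q})$, every edge of $\mathcal{Q}$ would be $\chi_0$ of a triple of the $n$-set $f(V(\mathcal{Q}))$, making that set a monochromatic $\chi_0$-clique — impossible. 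Hence two vertices of $\mathcal{Q}$ get the same $f$-value, so $f$ is not proper for $G^\ast$; as $f$ was arbitrary, $\chi(G^\ast)\ge R$ and $|E(G^\ast)|\ge\binom{R}{2}$, which already yields $\eR(\mathcal{K}_n^{(3)})\ge\tfrac{n-2}{3}\binom{R}{2}$.

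\emph{Second estimate: $|E(G^\ast)|\ge m\binom{R}{2}$ for some $m=\Theta(n)$.} Rerun the argument with an additional independent uniform map $g\colon V(\mathcal{H})\to[m]$, where $m=\Theta(n)$ and $m<n$, colouring each edge by a fixed two-coordinate scheme on $[R-1]\times[m]$: use $\chi_0$ on the $f$-coordinate when the $f$-values are distinct; otherwise fall back to the $g$-coordinate (any fixed colouring of $E(\mathcal{K}_m^{(3)})$ works there, since $m<n$ forbids a monochromatic $\mathcal{K}_n^{(3)}$ automatically); colour red the edges irregular for both coordinates. Granting that this scheme has no monochromatic $\mathcal{K}_n^{(3)}$ on $[R-1]\times[m]$, a monochromatic copy $\mathcal{Q}$ in $\mathcal{H}$ must contain a pair $\{x,y\}$ with $f(x)=f(y)$ \emph{and} $g(x)=g(y)$, and this pair is $n$-rich. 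Thus, for each fixed $g$, the $n$-rich pairs monochromatic under $g$ form a graph of chromatic number $\ge R$, so there are at least $\binom{R}{2}$ of them; averaging over a uniform $g$, in which a fixed $n$-rich pair is monochromatic with probability $1/m$, gives $|E(G^\ast)|\ge m\binom{R}{2}$. Taking $m$ a suitable multiple of $n$ then yields $\eR(\mathcal{K}_n^{(3)})\ge\tfrac{n^2}{96}\binom{R}{2}$.

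The main obstacle is justifying that the two-coordinate scheme of the second estimate really produces no monochromatic $\mathcal{K}_n^{(3)}$: one has to pin down which triples of $[R-1]\times[m]$ receive which colour — especially the triples that are irregular for \emph{both} coordinates yet constant on neither — and, more seriously, one has to handle monochromatic copies of $\mathcal{K}_n^{(3)}$ in $\mathcal{H}$ whose vertex set is irregular for $f$ in a complicated way (a repeated $f$-value of multiplicity $\ge3$, or several distinct repeated pairs), where it is not clear a priori that the $f$-repeated and $g$-repeated pairs forced in $\mathcal{Q}$ can be taken to coincide rather than merely to share a vertex. The slack in this analysis, together with the passage from $n-2$ to $n$ and the averaging step, is what forces the constant to be $\tfrac1{96}$ rather than something cleaner; closing the gap to the conjectured truth $\eR(\mathcal{K}_n^{(3)})=\Theta\bigl(\binom{R}{3}\bigr)$ would presumably require an entirely different idea.
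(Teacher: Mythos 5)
Your first estimate is correct and a nice Chv\'atal-style pullback argument: pulling back a good $2$-colouring of $K_{R-1}^{(3)}$ through any $f\colon V(\mathcal{H})\to[R-1]$ shows that every such $f$ collides on some pair inside a monochromatic clique, so the graph $G^\ast$ of $n$-rich pairs has chromatic number at least $R$, hence at least $\binom{R}{2}$ edges, giving $\eR(\mathcal{K}_n^{(3)})\ge\tfrac{n-2}{3}\binom{R}{2}$. That part is sound. However, the second estimate --- the one you need to get the extra factor of $\Theta(n)$ and reach $\tfrac{n^2}{96}\binom{R}{2}$ --- rests on the assumption ``Granting that this scheme has no monochromatic $\mathcal{K}_n^{(3)}$ on $[R-1]\times[m]$,'' and this assumption is provably \emph{false}: the scheme is a $2$-colouring of the complete $3$-graph on $(R-1)m$ vertices, and since $(R-1)m\ge R=R(\mathcal{K}_n^{(3)})$ for $m\ge 2$, the very definition of the Ramsey number guarantees that \emph{every} $2$-colouring of $K_{(R-1)m}^{(3)}$ contains a monochromatic $\mathcal{K}_n^{(3)}$. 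No clever choice of $\chi_1$ or of tie-breaking rules for doubly-irregular triples can avoid this; the obstacle you flag at the end is not a matter of bookkeeping but a hard wall. Consequently the claim that $\chi(G^\ast_g)\ge R$ for every fixed $g$ is unjustified, and the averaging step that would give $|E(G^\ast)|\ge m\binom{R}{2}$ does not go through. As written, your argument only establishes the linear-in-$n$ bound $\tfrac{n-2}{3}\binom{R}{2}$, not the stated quadratic one.

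The paper's proof goes by a completely different route. It argues by induction on $N=|V(\mathcal{H})|$: if $|E(\mathcal{H})|<\tfrac{n^2}{96}\binom{R}{2}$, averaging produces a pair $\{u,v\}$ with codegree $\deg(u,v)<\tfrac{n^2}{32}$; one merges $v$ into $u$ to obtain $\mathcal{H}_u$ on $N-1$ vertices with no more edges, applies the inductive hypothesis to get a $2$-colouring $\chi_u$ of $\mathcal{H}_u$ with no monochromatic $\mathcal{K}_n^{(3)}$, and then lifts $\chi_u$ to a colouring $\chi$ of $\mathcal{H}$ by greedily partitioning the codegree neighbourhood $T=N_{\mathcal{H}}(u,v)$ into monochromatic pieces $S_1,\dots,S_m$ of size $\ge n/4$ (with respect to $\chi_u$ and the apex $u$) plus a remainder $U$ containing no monochromatic clique of order $n/4$; edges through both $u$ and $v$ into $S_i$ are given the colour \emph{opposite} to that of $\mathcal{H}_u[S_i\cup\{u\}]$. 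The codegree bound $|T|<n^2/32$ forces $m<n/8$, which in turn shows that any monochromatic clique through $u$ and $v$ would have fewer than $n-2$ remaining vertices --- a contradiction. This vertex-merging induction is where the quadratic factor $n^2$ actually comes from, and it has no analogue in your pullback scheme. If you want to pursue your approach further, the missing ingredient is a genuinely new idea for forcing many $n$-rich pairs per colour class of $G^\ast$ that does not contradict the Ramsey property of the product ground set.
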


The following basic questions remain open.

\begin{ques} \label{ques:1}
Is $\eR(\mathcal{K}_n^{(k)}) = \binom{R(\mathcal{K}_n^{(k)})}{k}$?
\end{ques}

\begin{ques} \label{ques:2}
For $k\ge 3$ let $N = R(\mathcal{K}_n^{(k)})$. Define $\mathcal{K}_N^{(k)^-}$ to be the hypergraph obtained from $\mathcal{K}_N^{(k)}$ by removing one edge. Is it true that $\mathcal{K}_N^{(k)^-}\rightarrow \mathcal{K}_n^{(k)}$?
\end{ques}

\noindent
Clearly, the affirmative answer to the latter gives a negative answer to Question~\ref{ques:1}.

\subsection{Trees}
Given integers $1\le \ell < k$ and $n$, a $k$-graph $\mathcal{T}_{n,\ell}^{(k)}$ of order~$n$ with edge set $\{e_1,\dots,e_m\}$ is an \emph{$\ell$-tree}, if for each $2\le j \le m$ we have $|e_j \cap \bigcup_{1\le i < j} e_i| \le \ell $ and
$e_j \cap \bigcup_{1\le i < j} e_i \subseteq e_{i_0}$ for some $1\le i_0 < j$.
We are able to give the following general upper bound for trees.

\begin{thm}\label{thm:ub_tree}
Let $1\le \ell < k$ be fixed integers. Then
\[
\eR(\mathcal{T}_{n,\ell}^{(k)}) = O(n^{\ell+1}).
\]
\end{thm}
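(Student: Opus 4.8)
The plan is to realise the host as a sparse random (or explicit pseudorandom) $k$-graph on $\Theta(n)$ vertices and to build a monochromatic copy of $\mathcal{T}:=\mathcal{T}_{n,\ell}^{(k)}$ greedily in the majority colour, using the $\ell$-tree structure of $\mathcal{T}$ to drive the embedding. The only feature of $\mathcal{T}$ needed is the defining one: since every new edge contributes at least $k-\ell$ new vertices, $\mathcal{T}$ has $m\le n/(k-\ell)=O(n)$ edges, and there is an ordering $e_1,\dots,e_m$ in which $e_j\cap\bigcup_{i<j}e_i$ is a set $A_j$ with $|A_j|\le\ell$ contained in a single earlier edge $e_{i_0(j)}$. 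Thus $\mathcal{T}$ is grown edge by edge, each step attaching $\ge k-\ell$ fresh vertices to an $\le\ell$-element anchor that already sits inside a placed edge.

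For the host I would take $\mathcal{H}$ to be the random $k$-graph on a vertex set $V$ with $|V|=Cn$, where $C=C(k,\ell)$ is a large constant, in which each $k$-set is an edge independently with probability $p=\Theta(n^{\ell+1-k})$ (a legitimate probability since $\ell<k$). Then $|E(\mathcal{H})|=\Theta(C^{k}n^{\ell+1})=O(n^{\ell+1})$ with high probability, which is precisely the bound we want; the exponent of $p$ is dictated by the requirement that $\ell$-sets have degree linear in $n$, namely $\mathbb{E}\deg_{\mathcal{H}}(S)=\Theta(C^{k-\ell}n)$ (room for the $n$ vertices of $\mathcal{T}$), while an $(\ell+1)$-set has expected degree only $\Theta(1)$. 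By Chernoff bounds and a union bound one shows that $\mathcal{H}$, with high probability, satisfies the pseudorandom properties used below: every $\ell$-set has degree $(1\pm o(1))\binom{|V|-\ell}{k-\ell}p$; every $(\ell+1)$-set has degree $O(\log n)$; and, for every $\ell$-set $S$ and every vertex set $U$ with $|U|\le n$, only an $O(1/C)$-fraction of the edges through $S$ meet $U$ (the codegrees $\deg_{\mathcal{H}}(S\cup\{v\})$ average $O(1)$ over the $Cn$ vertices and the heavy ones are too rare to matter).

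Now fix a $2$-colouring of $\mathcal{H}$; let red be a colour with at least $\tfrac12|E(\mathcal{H})|$ edges, and fix a suitable small constant $\delta>0$. Call an $\ell$-set \emph{rich} if at least $\delta n$ of its edges are red. I would embed $\mathcal{T}$ in red by induction on $j$, maintaining the invariant that every edge placed so far is a red edge all of whose $\ell$-subsets are rich. To place $e_j$: the image $\phi(A_j)$ of the anchor is then rich, so it lies in at least $\delta n$ red edges; discard the $O(1/C)$-fraction of them that meet the $\le n$ already-used vertices, discard those containing a non-rich $\ell$-subset, and take what remains to be $\phi(e_j)$. This yields a monochromatic copy of $\mathcal{T}$, so $\mathcal{H}\to\mathcal{T}$ and $\eR(\mathcal{T})\le|E(\mathcal{H})|=O(n^{\ell+1})$; a disconnected $\mathcal{T}$, i.e.\ a forest of $\ell$-trees, is handled in the same way, restarting the embedding on each component.

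The substance of the argument, and the step I expect to be the main obstacle, is showing that the two ``discards'' above cannot use up all of the $\ge\delta n$ red edges through $\phi(A_j)$. A $2$-colouring can make the majority colour far from uniformly dense: it can render a constant fraction of all $\ell$-sets non-rich, and can even arrange for a given rich $\ell$-set to have most of its red edges passing through non-rich $\ell$-sets, so it does not suffice merely to know that rich $\ell$-sets are plentiful. The way around this should combine: (i) that $\mathcal{T}$ has only $O(n)$ edges, so only $O(n)$ anchors are ever used; (ii) that the embedding is ours to choose, so we can steer it through \emph{robustly rich} $\ell$-sets, i.e.\ those with $\Omega(n)$ red extensions whose $\ell$-subsets are again rich (using that any spoiling non-rich $\ell$-subset other than the anchor lies in $\ge\ell+1$ fixed vertices and hence, by the degree bound above, in only $O(\log n)$ red edges through the anchor); and (iii) a budget count: forcing $\ell$-sets to be non-rich, or forcing a rich anchor to lack good extensions, costs the minority colour many edges, while its entire stock is $O(n^{\ell+1})$, so only a negligible portion of the rich $\ell$-sets can be spoiled and we route around those. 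Choosing the constant $C$ and the threshold $\delta$ so that this budget count closes, uniformly over all $2$-colourings, is the crux; the rest is routine concentration of measure and bookkeeping.
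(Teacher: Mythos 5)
Your overall plan---a sparse host whose $\ell$-set codegrees are around $n$, then grow $\mathcal{T}$ greedily inside the majority colour---is the right shape, and your random-graph parameters are correctly dimensioned. But the argument has a genuine gap exactly where you flag it: you never show that the greedy embedding can always continue, and you explicitly leave the ``budget count'' open. Two concrete problems with your route. First, in the random $k$-graph on $Cn$ vertices with $p=\Theta(n^{\ell+1-k})$, an $(\ell+1)$-set lies in $\Theta(\log n)$ edges rather than $O(1)$, so even if all $\ell$-codegrees are $\ge n$ in the surviving colour, the $\le n$ vertices already used could block up to $\Theta(n\log n)$ of the $\sim n$ edges through the anchor; to rule this out you need the pseudorandom statement that for every $\ell$-set $S$ and every $W$ with $|W|\le n$ only a small fraction of edges through $S$ meet $W$, which you assert but do not prove (and which requires a union bound over exponentially many sets $W$). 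Second, your ``robustly rich'' device tries to maintain, edge by edge, that every $\ell$-subset of a placed edge is rich, but you give no argument that a rich anchor has even one red extension all of whose other $\ell$-subsets are again rich; the proposed fix is precisely what you concede you cannot close.

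The paper avoids both difficulties with a cleaner design. The host is a partial Steiner system $S(\ell+1,k,N)$ on $N=\lceil cn\rceil+\ell$ vertices (from R\"odl's packing theorem), so every $(\ell+1)$-set lies in at most one edge; hence for an anchor $\ell$-set $U$ with codegree $\ge n$, each of the $<n$ previously used vertices forbids at most one edge through $U$, and a fresh extension always exists with no further pseudorandomness needed. More importantly, the paper does not chase individual rich $\ell$-sets at all: it performs a global cleaning of the majority colour, repeatedly deleting any $\ell$-set of positive degree $<n$ together with all edges through it. Each deletion costs $<n$ edges and there are at most $\binom{N}{\ell}$ possible $\ell$-sets, so at most $\binom{N}{\ell}n$ edges are ever lost; since $\ell<k$ the total edge count is $\Theta(\binom{N}{\ell+1})$, and the constant $c$ is chosen so that $\binom{N}{\ell}n<\tfrac12|E(\mathcal{H})|$. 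The surviving nonempty sub-hypergraph has $\delta_\ell\ge n$, so the invariant you wanted (``every $\ell$-subset of a placed edge has $\ge n$ extensions'') holds automatically for every remaining $\ell$-set, and the greedy embedding goes through with no further bookkeeping. Your sketch is incomplete precisely at the crux; the Steiner-system-plus-cleaning combination is what fills that hole.
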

One can easily show that this bound is tight in order of magnitude when $\ell=1$ (see Section~\ref{sec:trees}  for details). The situation for $\ell\ge 2$ is much less clear.

\begin{ques} 
Let $2\le \ell < k$ be fixed integers. Is it true that for every $n$ there exists a $k$-uniform $\ell$-tree $\mathcal{T}$ of order at most~$n$ such that 
\[
\eR(\mathcal{T}) = \Omega(n^{\ell+1}).
\]
\end{ques}

Here is another related question pointed out by Fox~\cite{Fox}. Let us weaken the restriction on the edge intersection in the definition of  $\mathcal{T}_{n,\ell}^{(k)}$. Let $\bar{\mathcal{T}}_{n,\ell}^{(k)}$ be a $k$-graph of order~$n$ with edge set $\{e_1,\dots,e_m\}$ such that for each $2\le j \le m$ we have $|e_j \cap \bigcup_{1\le i < j} e_i| \le \ell $.

\begin{ques} Let $2\le \ell < k$ be fixed integers.
Is  $\eR(\bar{\mathcal{T}}_{n,\ell}^{(k)} )$ polynomial in $n$?
\end{ques} 
\noindent

\subsection{Paths}
Given integers $1\leq \ell< k$ and $n \equiv \ell\ (\textrm{mod}\ k-\ell)$, we define
an {\em $\ell$-path $\mathcal{P}_{n,\ell}^{(k)}$} to be  the $k$-uniform hypergraph with vertex set $[n]$ and edge set 
$\{e_1, \ldots, e_m\}$, where $e_i=\{(i-1)(k-\ell)+1,(i-1)(k-\ell)+2,\dots,(i-1)(k-\ell)+k\}$ and $m=\frac{n-\ell}{k-\ell}$. In other words, the edges are intervals of length~$k$ in $[n]$ and consecutive edges intersect in precisely $\ell$ vertices.
The two extreme cases of $\ell=1$ and $\ell=k-1$ are referred to as, respectively, \emph{loose} and
\emph{tight} paths. Clearly every $\ell$-path is also an $\ell$-tree. Thus, by Theorem~\ref{thm:ub_tree} we obtain the following result.
\begin{equation}\label{eq:l_path}
\eR(\mathcal{P}_{n,\ell}^{(k)}) = O(n^{\ell+1}).
\end{equation}
Our first result shows that 
determining the size-Ramsey number of a path $\mathcal{P}_{n,\ell}^{(k)}$ for $\ell \le \frac{k}{2}$ can easily be reduced to the graph case. 
\begin{prop} \label{pathprop} Let $1\le \ell \le \frac{k}{2}$. Then,
\[
\eR(\mathcal{P}_{n,\ell}^{(k)}) \le \eR(P_n) = O(n).
\]
\end{prop}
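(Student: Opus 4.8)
The plan is to start from Beck's theorem~(\ref{eq:beck}): fix a graph $H$ with $H \to P_n$ and exactly $\eR(P_n) = O(n)$ edges, and then ``blow up'' each edge of $H$ into a single $k$-edge so that every path in $H$ turns into an $\ell$-path in the resulting $k$-graph. Concretely, I would assign to each vertex $u$ of $H$ a set $Q_u$ of $\ell$ fresh vertices and to each edge $e$ of $H$ a set $P_e$ of $k-2\ell$ fresh vertices, all of these $|V(H)|+|E(H)|$ sets pairwise disjoint, and then replace the edge $e=\{u,v\}$ of $H$ by the $k$-element set $f_e := Q_u \cup P_e \cup Q_v$. Since $\ell \le k/2$ we have $k-2\ell \ge 0$, so $|f_e|=k$ and the construction makes sense (when $\ell = k/2$ the sets $P_e$ are empty). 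Let $\mathcal{H}$ be the $k$-graph on $\ell|V(H)| + (k-2\ell)|E(H)|$ vertices with edge set $\{f_e : e \in E(H)\}$.

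Next I would note that $e \mapsto f_e$ is a bijection between $E(H)$ and $E(\mathcal{H})$, so $|E(\mathcal{H})| = |E(H)| = \eR(P_n)$, and any $2$-edge-coloring of $\mathcal{H}$ pulls back to a $2$-edge-coloring of $H$. Because $H \to P_n$, that coloring of $H$ contains a monochromatic path on vertices $v_1,\dots,v_n$ with edges $e_i = \{v_i,v_{i+1}\}$; the corresponding $k$-edges $f_{e_1},\dots,f_{e_{n-1}}$ then all receive the same color. The remaining point is to check that $f_{e_1},\dots,f_{e_{n-1}}$ genuinely form an $\ell$-path: one verifies $f_{e_i}\cap f_{e_{i+1}} = Q_{v_{i+1}}$, which has size $\ell$; that $Q_{v_i} = f_{e_{i-1}}\cap f_{e_i}$ is disjoint from $Q_{v_{i+1}} = f_{e_i}\cap f_{e_{i+1}}$; and that $f_{e_i}\cap f_{e_j} = \emptyset$ whenever $|i-j|\ge 2$ --- all immediate since $v_1,\dots,v_n$ are distinct and the sets $Q_u,P_e$ are pairwise disjoint. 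Since $k-\ell \ge 1$ and $\ell \ge 1$ we have $\tfrac{n-\ell}{k-\ell} \le n-\ell \le n-1$, so this monochromatic $\ell$-path has at least $m = \tfrac{n-\ell}{k-\ell}$ edges, and its first $m$ edges span a copy of $\mathcal{P}_{n,\ell}^{(k)}$ (using $n \equiv \ell \pmod{k-\ell}$). Hence $\mathcal{H} \to \mathcal{P}_{n,\ell}^{(k)}$, and therefore $\eR(\mathcal{P}_{n,\ell}^{(k)}) \le |E(\mathcal{H})| = \eR(P_n) = O(n)$.

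I do not expect a serious obstacle: the argument is a direct gadget substitution. The only step requiring a little care is the bookkeeping of intersection sizes that shows the image of a path in $H$ is exactly an $\ell$-path (rather than, say, two consecutive edges overlapping in more than $\ell$ vertices), together with the trivial check that the boundary regimes $k = 2\ell$ and $k-\ell = 1$ cause no problem.
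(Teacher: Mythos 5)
Your proof is correct and is essentially the same gadget substitution the paper uses: replace each vertex of a graph $H$ realizing $\eR(P_n)$ by an $\ell$-set, replace each edge by a $k$-set consisting of the two endpoint $\ell$-sets plus $k-2\ell$ private vertices, and pull back a coloring of the resulting $k$-graph to a coloring of $H$. Your write-up is in fact a bit more careful than the paper's, which simply asserts that a monochromatic $P_n$ in $H$ yields a monochromatic $\mathcal{P}_{n,\ell}^{(k)}$; you explicitly check the intersection sizes (consecutive $k$-edges meet in exactly $\ell$ vertices, non-consecutive ones are disjoint) and you truncate the resulting $\ell$-path of $n-1$ edges to its first $(n-\ell)/(k-\ell)$ edges to land exactly on $\mathcal{P}_{n,\ell}^{(k)}$ --- details the paper leaves implicit.
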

\noindent
Clearly, this result is optimal.

Determining the size-Ramsey number of a path $\mathcal{P}_{n,\ell}^{(k)}$ for $\ell > \frac{k}{2}$ seems  to be a much harder problem. Here we will only consider tight paths ($\ell = k-1$). By~\eqref{eq:l_path} we get
\begin{equation}\label{pathbound}
\eR(\mathcal{P}_{n,k-1}^{(k)}) = O(n^k).
\end{equation}
The most complicated result of this paper is the following
improvement of (\ref{pathbound}). 
\begin{thm} \label{thm:tightpath}
  \label{thm:tight_path_k}
  Fix $k \geq 3$ and let $\alpha = (k-2)/(\binom{k-1}{2} + 1)$.  Then
	\[
	\eR(\mathcal{P}_{n,k-1}^{(k)}) = O(n^{k-1 - \alpha}(\log{n})^{1 + \alpha}).
	\]
\end{thm}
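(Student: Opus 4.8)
The plan is to exhibit a host $\mathcal{H}$ with the prescribed number of edges and then, for the harder direction, to show that every $2$-colouring of $\mathcal{H}$ yields a monochromatic tight path on $n$ vertices. As in the graph case behind Beck's bound $\eR(P_n)=O(n)$, the host should not be a clique but a \emph{pseudorandom} $k$-graph on $N$ of order $n$ vertices --- either a random $k$-graph carrying a layered or block structure, or one whose edges are read off a pseudorandom graph $Q$ on $[N]$ (the edges of $\mathcal{H}$ being the vertex sets of copies of a fixed small pattern in $Q$) --- with its density tuned exactly so that $|E(\mathcal{H})|=O(n^{k-1-\alpha}(\log n)^{1+\alpha})$. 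Counting those edges and checking the exponent is the routine part; I expect one also has to build the host by recursion on $k$, lifting a host for $(k-1)$-uniform tight paths to one for $\mathcal{P}_{n,k-1}^{(k)}$ by adjoining one coordinate, which is the natural way for the telescoping sum $1+2+\cdots+(k-2)=\binom{k-1}{2}$ to surface in the denominator of $\alpha$.

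For the arrowing I would argue in stages. Fix a $2$-colouring and pass to the majority colour class $\mathcal{G}\subseteq\mathcal{H}$, so $|E(\mathcal{G})|\ge|E(\mathcal{H})|/2$, and seek a tight path on $n$ vertices inside $\mathcal{G}$. First, use the pseudorandomness of $\mathcal{H}$ together with an averaging argument to produce inside $\mathcal{G}$ many short monochromatic tight paths (of length, say, $\Theta(\log n)$); the density of $\mathcal{H}$ is chosen precisely so that enough of this structure survives the deletion of half the edges. Second, concatenate the pieces, one at a time, into a single tight path of length $n$: each join asks for $O(1)$ bridging edges of $\mathcal{G}$ matching the last $k-1$ vertices of one piece to the first $k-1$ vertices of the next, and I would keep a reservoir of unused vertices so that, by pseudorandomness, there are always many admissible bridges --- and, if needed, many admissible choices for the end segments of the pieces, so that a bridge can be assembled coordinate by coordinate. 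Finally, adjust the length to land exactly on $n$. The $(\log n)^{1+\alpha}$ loss would be an artifact of the expansion thresholds needed in these two steps, not (conjecturally) the truth.

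The hard part is the second step --- equivalently, the assertion that a subgraph of $\mathcal{H}$ retaining half of its edges still robustly contains a near-spanning tight path. Because a tight path is rigid, with consecutive edges overlapping in $k-1$ vertices, the usual machinery for long paths breaks down: greedy extension in a single colour can be forced to stall, since an adversary can leave almost every $(k-1)$-tuple with no extension in that colour; and a P\'osa-type rotation at an endpoint rewrites $k-1$ windows simultaneously, every one of which must remain monochromatic. Pushing the connecting/rotation step through --- presumably via an absorption- or reservoir-type argument tied to the exact density above --- is the crux, and is why tight paths are much harder here than the loose paths handled by Proposition~\ref{pathprop}.
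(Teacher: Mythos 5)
Your guess at the host is essentially the paper's: the proof takes a random graph $G \in \mathbb{G}(n,p)$ with $p = d(\log n/n)^{\beta}$, $\beta = 1/(\binom{k-1}{2}+1)$, and lets $E(\mathcal{H})$ be the family of vertex sets of $k$-cliques of $G$; counting $k$-cliques gives $|E(\mathcal{H})| = O(n^{k-1-\alpha}(\log n)^{1+\alpha})$. So ``edges read off a pseudorandom graph $Q$ as copies of a fixed small pattern'' is exactly right, and the exponent falls out of the clique-count of $\mathbb{G}(n,p)$ (no recursion on $k$ is needed; the $\binom{k-1}{2}$ in $\beta$ simply arises as the number of edges of a $(k-1)$-clique, controlling the $(k-1)$-clique degree of a vertex).

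The genuine gap is in the arrowing argument, and it is the crux, as you yourself note. Your plan --- pass to the majority colour, find many short tight paths, then glue them via bridging/absorption --- is not what the paper does and you (correctly) can't see how to make it work: in this sparse regime a single colour class need not contain a near-spanning tight path at all, and the bridging step between two prescribed $(k-1)$-tuples has no support. Moreover you dismiss greedy extension on the grounds that it can stall, but the paper's key insight is to \emph{use} the stalls. It runs a greedy blue tight-path builder; whenever the builder is blocked it deposits the last $(k-1)$-tuple into a ``trash'' family $\mathcal{B}$ --- this deposit is a witness that every $k$-clique extending that tuple into the unused vertex pool is red. Once $|\mathcal{B}|=cn$, a pseudorandomness property of $G$ (for every small $A$ and every family $\mathcal{B}$ of $cn$ disjoint $(k-1)$-cliques, the extensions $y_{A,\mathcal{B}}$ of $\mathcal{B}$ landing inside $A\cup\bigcup\mathcal{B}$ are at most $\tfrac{1}{k+1}$ of the outward extensions $x_{A,\mathcal{B}}$) converts these witnesses into a surplus of red over blue among the counted edges. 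Deleting those blue edges and iterating, together with a bound on how many $k$-cliques can meet a set of $O(n)$ vertices, yields $t_R > \tfrac12 t_k$. Running the symmetric argument for red gives $t_B > \tfrac12 t_k$, a contradiction, so a monochromatic $\mathcal{P}^{(k)}_{cn,k-1}$ must exist. In short: the paper does not locate a long path in one colour directly; it derives a counting contradiction from the assumption that neither colour contains one, and it is precisely the stalled greedy attempts (not connected short paths) that fuel the count. Without this mechanism your outline does not close.
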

The gap in the exponent of $n$ between the upper and lower bounds for this problem remains quite large (between 1 and $k-1-\alpha$). We believe that the lower bound is much closer to the truth. Indeed,  
the following question still remains open.
\begin{ques}
Is $\eR(\mathcal{P}_{n,k-1}^{(k)}) = O(n)$?
\end{ques}
\noindent
If true, then since $\eR(\mathcal{P}_{n,\ell}^{(k)}) \le \eR(\mathcal{P}_{n,k-1}^{(k)})$, this would imply the linearity of the size-Ramsey number of all $\ell$-paths.

\subsection{Bounded degree hypergraphs}

Our main result about bounded degree hypergraphs is that their size-Ramsey numbers can be superlinear.
This is proved by extending the methods of R\"odl and Sz\'emer\'edi~\cite{RodSze2000} to the hypergraph case. 

\begin{thm} \label{thm:bounddegree}
Let $k\ge 3$ be an integer. Then there is a positive constant $c=c(k)$ such that for every $n$ there is a $k$-graph $\mathcal{G}$ of order at most~$n$ with maximum degree~$k+1$ such that 
\[
\hat{R}(\mathcal{G}) = \Omega(n (\log n)^c).
\]
\end{thm}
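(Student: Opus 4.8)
The plan is to follow the strategy of R\"odl and Szemer\'edi~\cite{RodSze2000} for graphs and adapt each ingredient to the $k$-uniform setting. The starting point is a suitable ``many edge-disjoint copies'' obstruction: one fixes a small parameter $d$ and considers a $k$-graph $\mathcal{G}_0$ on $O(1)$ vertices with maximum degree $k+1$ (for instance a random or explicit bounded-degree $k$-graph) that is ``Ramsey-hard'' in the sense that any $\mathcal{H}$ with $\mathcal{H}\to\mathcal{G}_0$ must contain many edges clustered together. The target $\mathcal{G}$ of order $n$ is then built as a disjoint union (or a mild blow-up) of $\Theta(n)$ copies of $\mathcal{G}_0$, so that $|E(\mathcal{G})| = \Theta(n)$ and $\Delta(\mathcal{G}) = k+1$; proving $\hat R(\mathcal{G})=\Omega(n(\log n)^c)$ reduces, after a standard averaging argument, to showing that no sparse host $\mathcal{H}$ can arrow $\mathcal{G}$.

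Concretely I would argue by contradiction: suppose $\mathcal{H}$ has at most $C n (\log n)^c$ edges and $\mathcal{H}\to\mathcal{G}$. The first step is a \emph{cleaning/regularization} lemma for $k$-graphs: because $\mathcal{H}$ is sparse, one can delete a small fraction of its edges so that the remainder decomposes into pieces that are either ``low-degree everywhere'' or concentrated on a bounded-size vertex set; this is where the hypergraph analogue of the R\"odl--Szemer\'edi degeneracy argument enters. The second step is the \emph{coloring step}: on each low-degree piece one uses a probabilistic 2-coloring (a random coloring, refined via the Lov\'asz Local Lemma or a martingale concentration bound, using $\Delta(\mathcal{G})=k+1$ to keep the dependency degree bounded) to destroy all monochromatic copies of $\mathcal{G}$; on the bounded-size concentrated pieces one colors by hand using the Ramsey-hardness of $\mathcal{G}_0$ together with the assumed sparsity, which forces a copy of $\mathcal{G}$ to use more than the available ``heavy'' structure once $c=c(k)$ is chosen appropriately. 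Combining the colorings on all pieces yields a 2-coloring of $\mathcal{H}$ with no monochromatic $\mathcal{G}$, contradicting $\mathcal{H}\to\mathcal{G}$.

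Two technical points need care. First, the definition of the auxiliary $\mathcal{G}_0$ and the exact counting that produces the extra $(\log n)^c$ factor: in the graph case the $\log$ power arises from iterating a ``bushy tree / expansion'' argument $\Theta(\log n)$ levels deep, and one must verify that the same iteration goes through for $k$-graphs, with the branching factor and the degree bound $k+1$ controlling how the exponent $c$ depends on $k$. Second, the concentration estimates in the random-coloring step: the number of copies of $\mathcal{G}$ through a fixed edge, and the read-off of bad events, must be bounded in terms of $k$ and the (bounded) degree, so that a local-lemma or Azuma-type argument applies uniformly across all sparse pieces.

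The main obstacle I expect is the regularization/cleaning step for $k$-uniform hypergraphs: unlike graphs, ``sparse'' $k$-graphs can have complicated co-degree behavior, and one must design the right notion of a ``heavy'' sub-configuration and show that deleting a $o(1)$-fraction of edges isolates all such configurations into boundedly many bounded-size blobs. Once this structural decomposition is in place, the probabilistic coloring on the light part and the hand coloring on the blobs should follow the graph template with only notational changes, and choosing $c=c(k)$ to make the blob count beat the sparsity bound is then a bookkeeping exercise.
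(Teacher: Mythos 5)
Your outline captures the broad skeleton of R\"odl--Szemer\'edi (disjoint union of pieces, split the host into a high-degree and a low-degree part, color the pieces separately), but the central construction you propose would not work, and the coloring step in the paper is quite different from what you describe.

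The paper does not take $\Theta(n)$ copies of a fixed constant-size $\mathcal{G}_0$. That cannot give a superlinear lower bound: if $\mathcal{H}_0\to\mathcal{G}_0$ and you take $2t$ vertex-disjoint copies of $\mathcal{H}_0$, then any $2$-coloring produces a monochromatic $\mathcal{G}_0$ in each copy, and by pigeonhole $t$ of them share a color, so $\hat R(t\mathcal{G}_0)=O(t)$. The crucial point in R\"odl--Szemer\'edi, preserved here, is that $\mathcal{G}$ is a disjoint union of $q=\Theta(n/\log n)$ \emph{pairwise non-isomorphic} pieces $\mathcal{G}_i$, each of order $\Theta(\log n)$. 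In the paper each $\mathcal{G}_i$ is a binary $k$-uniform tree $\mathcal{B}$ of depth $t\approx\log_2(2\log_2 n/\log_2\log_2 n)$ with a tight path placed on its leaves in one of $|L(\mathcal{B})|!$ orders; since $|Aut(\mathcal{B})|<2^{2^t}$, one gets more than $n$ non-isomorphic choices, which is exactly what the pigeonhole later exploits. The $(\log n)^c$ factor comes from the $\Theta(\log n)$ size of each piece together with the low-degree count $d^{2\cdot 2^t}\le n^{4/5}$, not from iterating an expansion argument or a local-lemma calculation.

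The coloring step is also not probabilistic. After splitting $V(\mathcal{H})$ into $V_{\mathrm{high}}$ (degree $\ge d=(\log_2 n)^{1/5}$, at most $n/10$ such vertices) and $V_{\mathrm{low}}$, one counts copies of each $\mathcal{G}_i$ rooted at an edge inside $V_{\mathrm{low}}$: the degree bound gives at most $n^{4/5}$ extensions per root edge, so by averaging over the $q>n/\log_2 n$ non-isomorphic pieces some $\mathcal{G}_{i_0}$ has at most $n^{4/5}(\log_2 n)^{6/5}$ copies in $V_{\mathrm{low}}$. The coloring is then explicit: color blue all root edges of those copies and all edges meeting $V_{\mathrm{high}}$, and color everything else red. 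There is no red $\mathcal{G}$ because there is no red $\mathcal{G}_{i_0}$, and there is no blue $\mathcal{G}$ because the blue edges span too few vertices, forcing an independent set larger than $\alpha(\mathcal{G})\le\frac{8}{9}n$. So no regularization lemma, no Lov\'asz Local Lemma, and no martingale concentration are needed. The two ideas you would need to add to make your sketch into a proof are precisely the log-size non-isomorphic pieces and this deterministic root-edge/high-degree coloring.
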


There are several other problems to consider such as finding the asymptotic of the size-Ramsey number of cycles and many other classes of hypergraphs. In general, they seem to be very difficult. Therefore, this paper is the first step towards a better understanding of this concept.

In the next sections  we prove these result
for cliques (Section~\ref{sec:cliques}), trees (Section~\ref{sec:trees}), paths (Section~\ref{sec:paths}), and hypergraphs with bounded degree (Section~\ref{sec:bounded}).

%%%%%%%%%%%%%%%%%%%%%%%%%%%%%%%%%%%%%%%%%%%%%%%%%%%%%%
%%%%%%%%%%%%%%%%%%%%%%%%%%%%%%%%%%%%%%%%%%%%%%%%%%%%%%
%%%%%%%%%%%%%%%%%%%%%%%%%%%%%%%%%%%%%%%%%%%%%%%%%%%%%%
\section{Cliques}\label{sec:cliques}

\noindent
{\bf Proof of Theorem \ref{cliquethm}.}
We show that if $\mathcal{H}$ is a 3-graph with $|E(\mathcal{H})| < \frac{n^2}{96}\binom{R(\mathcal{K}_n^{(3)})}{2}$ for $n\ge 4$, then 
$\mathcal{H}\nrightarrow \mathcal{K}_n^{(3)}$.

Induction on $N = |V(\mathcal{H})|$. If $N < R(\mathcal{K}_n^{(3)})$, then there is a 2-coloring of $K_N^{(3)}$ with no monochromatic $K_n^{(3)}$. Since $\mathcal{H} \subseteq K_N^{(3)}$, this coloring yields a 2-coloring of $\mathcal{H}$ with no monochromatic $K_n^{(3)}$.

Suppose that $N \ge R(\mathcal{K}_n^{(3)})$. Since $|E(\mathcal{H})| < \frac{n^2}{96}\binom{R(\mathcal{K}_n^{(3)})}{2}$, there are $u$ and $v$ in $V(\mathcal{H})$ with $\deg(u,v)=|\{ e\in E(\mathcal{H}) : \{u,v\}\subseteq e \}| < \frac{n^2}{32}$. Otherwise, 
\[
|E(\mathcal{H})| = \frac{1}{3} \sum_{\{u,v\} \in \binom{V(\mathcal{H})}{2}} \deg(u,v)
\ge \frac{1}{3} \binom{N}{2} \frac{n^2}{32}
> |E(\mathcal{H})|,
\]
a contradiction.

Let $u$ and $v$ be such that $\deg(u,v) < \frac{n^2}{32}$. Define $\mathcal{H}_u$ as follows:
\[
V(\mathcal{H}_u) = V(\mathcal{H}) \setminus \{v\}
\]
and
\[
E(\mathcal{H}_u) = \{ e  : v\notin e \in E(\mathcal{H})\}
\cup \left\{ \{u,x,y\} : \{v,x,y\}\in E(\mathcal{H}) \text{ and } \{u,x,y\}\notin E(\mathcal{H})\right\}.
\]
Clearly, $|V(\mathcal{H}_u)| = N-1$ and $|E(\mathcal{H}_u)| \le |E(\mathcal{H})| < \frac{n^2}{96}\binom{R(\mathcal{K}_n^{(3)})}{2}$. By the inductive hypothesis there is a 2-coloring $\chi_u$ of the edges of $\mathcal{H}_u$ with no monochromatic $\mathcal{K}_n^{(3)}$. Let $T=T_1 = N_{\mathcal{H}}(u,v)=\{ w\in V(\mathcal{H}) : \{u,v,w\} \in E(\mathcal{H}) \}$. Thus, $T_1 \subseteq V(\mathcal{H}_u)$ and $|T_1| < \frac{n^2}{32}$. If there exists $S_1 \subseteq T_1$ such that $|S_1| \ge \frac{n}{4}$ and $\mathcal{H}_u[S_1 \cup \{u\}]$ is monochromatic, then set $T_2 = T_1 \setminus S_1$. If there exists $S_2 \subseteq T_2$ such that $|S_2| \ge \frac{n}{4}$ and $\mathcal{H}_u[S_2 \cup \{u\}]$ is monochromatic, then set $T_3 = T_2 \setminus S_2$. We continue this process obtaining 
\[
T = S_1 \cup S_2 \cup \dots \cup S_m \cup U,
\]
where $\mathcal{H}_u[S_i \cup \{u\}]$ is monochromatic, $|S_i| \ge \frac{n}{4}$, and $\mathcal{H}_u[U \cup \{u\}]$ contains only monochromatic cliques of order at most $\frac{n}{4}$.

Now we define a 2-coloring $\chi$ of $\mathcal{H}$. 
\begin{enumerate}[(i)]
\item If $v \notin e$, then $\chi(e) = \chi_u(e)$. 
\item If $v \in e = \{v,x,y\}$ and $u\notin e$, then $\chi(e) = \chi_u(\{u,x,y\})$.
\item If $\{u,v\} \subseteq e = \{u,v,x\}$ and $x \in S_i$, then $e$ takes the opposite color to the color of $\mathcal{H}_u[S_i \cup \{u\}]$.
\item If $\{u,v\} \subseteq e = \{u,v,x\}$ and $x \in U$, then color $e$ arbitrarily. 
\end{enumerate}

Now suppose that there is a monochromatic clique $\mathcal{K} = \mathcal{K}_n^{(3)}$ in $\mathcal{H}$. Such a clique must contain~$v$.
Now there are two cases to consider. If $u \notin V(\mathcal{K})$, then the subgraph of $\mathcal{H}_u$ induced by $V(\mathcal{K})\cup \{u\} \setminus \{v\}$ is also a monochromatic copy of $\mathcal{K}_n^{(3)}$, a contradiction. Otherwise,
$u \in V(\mathcal{K})$. Thus, $V(\mathcal{K}) \setminus \{u,v\} \subseteq T$ and $|V(\mathcal{K}) \setminus \{u,v\}| = n-2$. Observe that $|V(\mathcal{K}) \cap S_i|\le 2$ and $|V(\mathcal{K}) \cap U| < \frac{n}{4}$. But this yields a contradiction
\[
n-2 = |V(\mathcal{K}) \setminus \{u,v\}| < 2m + \frac{n}{4} < 2\frac{\frac{n^2}{32} }{ \frac{n}{4} } + \frac{n}{4} = \frac{n}{2} \le n-2,
\]
for $n\ge 4$.
\qed

%%%%%%%%%%%%%%%%%%%%%%%%%%%%%%%%%%%%%%%%%%%%%%%%%%%%%%
%%%%%%%%%%%%%%%%%%%%%%%%%%%%%%%%%%%%%%%%%%%%%%%%%%%%%%
%%%%%%%%%%%%%%%%%%%%%%%%%%%%%%%%%%%%%%%%%%%%%%%%%%%%%%
\section{Trees}\label{sec:trees}
First for convenience we recall the definition of a hypertree.
Given integers $1\le \ell < k$ and $n$, recall that a $k$-graph $\mathcal{T}_{n,\ell}^{(k)}$ of order~$n$ with edge set $\{e_1,\dots,e_m\}$ is an \emph{$\ell$-tree}, if for each $2\le j \le m$ we have $|e_j \cap \bigcup_{1\le i < j} e_i| \le \ell $ and
$e_j \cap \bigcup_{1\le i < j} e_i \subseteq e_{i_0}$ for some $1\le i_0 < j$.
\bigskip

\noindent
{\bf Proof of Theorem \ref{thm:ub_tree}.}
Fix $1 \le \ell \le k$.  We are to show that
$\eR(\mathcal{T}_{n,\ell}^{(k)}) = O(n^{\ell+1})$.
Recall that a \emph{partial Steiner system $S(t,k,N)$} is a $k$-graph of order $N$ such that each $t$-tuple is contained in at most one edge. 
Due to a result of R\"odl~\cite{Rod} it is known that there is a constant $N_0 = N_0(t,k)$ such that for every $N\ge N_0$ there is an $\mathcal{S} = S(t,k,N)$ with the number of edges satisfying
\begin{equation}\label{eq:steiner}
\frac{9}{10} \cdot \frac{\binom{N}{t}}{\binom{k}{t}} \le |E(\mathcal{S})| \le \frac{\binom{N}{t}}{\binom{k}{t}}
\end{equation}
(see also \cite{Kev,W1,W2,W3} for similar results). It is easy to observe that for $1\le s\le t$ every $s$-tuple is contained in at most $\frac{\binom{N-s}{t-s}}{\binom{k-s}{t-s}}$ edges.

Fix $1\le \ell < k$. Let $N = \lceil c n \rceil + \ell$, where the constant $c$ is defined as
\[
c = \max\left\{ N_0(\ell+1, k), \frac{20}{9}(\ell+1)\binom{k}{\ell+1} \right\}.
\] 
Let $\mathcal{H}$ be a $S(\ell+1,k,N)$ satisfying \eqref{eq:steiner}. Observe that if $\ell +1 = k$, then $\mathcal{H}$ can be viewed as a complete $k$-graph of order~$N$. Clearly, $|E(\mathcal{H})| = O(n^{\ell+1})$. It remains to show that for any $\mathcal{T} = \mathcal{T}_{n,\ell}^{(k)}$ tree, $\mathcal{H}\to \mathcal{T}$.

Define a \emph{degree} of a set $U\subseteq V(\mathcal{H})$ ($1\le |U| < k$) by
\[
\deg(U) = |\{e \in E(\mathcal{H}) : e \supseteq U\}|
\]
and for $E(\mathcal{H})\neq \emptyset$ a \emph{minimum (non-zero) $\ell$-degree} by
\[
\delta_{\ell}(\mathcal{H}) = \min \{ \deg(U): |U| =\ell\ \text{and}\ U\subseteq e\ \text{for some}\ e\in E(\mathcal{H})\}.
\]
First observe that for any 2-coloring of the edges of $\mathcal{H}$, there is a monochromatic sub-hypergraph $\mathcal{F}$ with 
$\delta_{\ell}(\mathcal{F}) \ge n$. Indeed, suppose that $\mathcal{\mathcal{H}}$ is colored with blue and red colors. 
Assume by symmetry that the red hypergraph $\mathcal{R}$ has at least $\frac{1}{2}|E(\mathcal{H})|$ edges. Set $\mathcal{R}_0  =\mathcal{R}$. If there exists $U_0 \subseteq V(\mathcal{R}_0)$ with $\deg_{\mathcal{R}_0}(U_0) < n$, then let $\mathcal{R}_1 = \mathcal{R}_0-U_0$ (we remove $U_0$ and all incident to $U_0$ edges). Now we repeat the process. If there exists $U_1 \subseteq V(\mathcal{R}_1)$ with $\deg_{\mathcal{R}_1}(U_1) < n$, then let $\mathcal{R}_2 = \mathcal{R}_1-U_1$. Continue this way to obtain hypergraphs
\[
\mathcal{R} = \mathcal{R}_0 \supseteq \mathcal{R}_1 \supseteq \mathcal{R}_2 \supseteq \dots \supseteq \mathcal{R}_m,
\]
where either $\delta_{\ell}(\mathcal{R}_m)\ge n$ or $\mathcal{R}_m$ is empty hypergraph. But the latter cannot happen, since the number of removed edges from $\mathcal{R}$ is less than
\[
\binom{N}{\ell} n 
= \binom{N}{\ell+1} \frac{\ell+1}{N-\ell} n 
\le \binom{N}{\ell+1} \frac{\ell+1}{c} 
\le \frac{9}{20} \cdot \frac{\binom{N}{\ell+1}}{\binom{k}{\ell+1}} < \frac{1}{2}|E(\mathcal{H})|.
\]

Now we greedily embed $\mathcal{T}$ into $\mathcal{F} = \mathcal{R}_m$. At every step we have a connected sub-tree $\mathcal{T}_i\subseteq \mathcal{T}$. Assume that we already embedded $i$ edges of $\mathcal{T}$ obtaining $\mathcal{T}_i$. Let $|U| \le \ell$ be such that $U\subseteq e$ for some $e\in E(\mathcal{T}_i)$. Observe that there is always an edge $f \in E(\mathcal{F}) \setminus E(\mathcal{T}_i)$ such that  $f \cap V(\mathcal{T}_i) = U$. Indeed, if $|U|=\ell$, then this is true since
$\deg_{\mathcal{F}}(U)\ge n$ and $|V(\mathcal{T}_i)|<n$ and every $(\ell+1)$-tuple of vertices of $\mathcal{F}$ is contained in at most one edge in $\mathcal{F}$. Otherwise, if $|U|<\ell$, first we find a set $W\subseteq V(\mathcal{F}) \setminus V(\mathcal{T}_i)$ such that $|W| = \ell - |U|$ and $U\cup W$ is contained in an edge of $\mathcal{F}$, and next apply the previous argument to $U\cup W$. Thus, we can extend $\mathcal{T}_i$ to $\mathcal{T}_{i+1}$, as required.
\qed
\bigskip

As mentioned in the introduction, it would be interesting to decide whether Theorem~\ref{thm:ub_tree} is tight up to the hidden constant. This is definitely the case for $\ell=1$. Indeed, let $\mathcal{T}$ be a $k$-uniform star-like tree of order $n$ defined as follows. Assume that $2k-2$ divides $n-1$. $\mathcal{T}$ consists of $\frac{n-1}{2k-2}$ arms $\mathcal{P}_i$ (each with two edges):
$E(\mathcal{P}_i) = \{ \{v,w_1^i,w_2^i,\dots,w_{k-1}^i\}, \{w_{k-1}^i, w_k^i, \dots, w_{2k-2}^i\}   \}$, where
$1\le i\le \frac{n-1}{2k-2}$ and all $w_j^i$ vertices are pairwise different (see Figure~\ref{fig:star}).

\begin{figure}
\begin{center}
\includegraphics[scale=0.6]{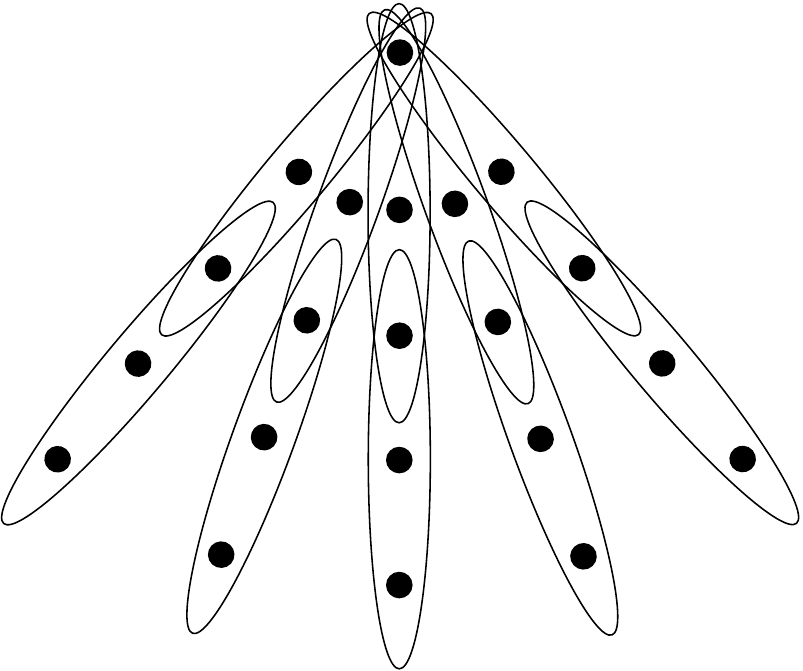}
\caption{A star of order $n$ with $\frac{n-1}{4}$ arms each of length 2.}
\label{fig:star}
\end{center}
\end{figure}

Assume that $\mathcal{H}\to \mathcal{T}$ and color $e\in \mathcal{H}$ by red if degree (in $\mathcal{H}$) of every vertex in $e$ is less than $\frac{n-1}{2k-2}$; otherwise $e$ is blue. Since $\mathcal{H}\to (\mathcal{T})^{e}_2$ and there is no red copy of~$\mathcal{T}$, there must be a blue copy of~$\mathcal{T}$. Every edge in such a copy has at least one vertex of degree at least~$\frac{n-1}{2k-2}$ (in~$\mathcal{H}$). Since $\mathcal{T}$ has $\frac{n-1}{2k-2}$ vertex disjoint edges and every edge  (in~$\mathcal{H}$) can intersect at most 3 of those disjoint edges,
\[
\eR(\mathcal{T}) \ge \frac{1}{3}\cdot \frac{n-1}{2k-2}\cdot \frac{n-1}{2k-2} = \Omega(n^2).
\]

%%%%%%%%%%%%%%%%%%%%%%%%%%%%%%%%%%%%%%%%%%%%%%%%%%%%%%
%%%%%%%%%%%%%%%%%%%%%%%%%%%%%%%%%%%%%%%%%%%%%%%%%%%%%%
%%%%%%%%%%%%%%%%%%%%%%%%%%%%%%%%%%%%%%%%%%%%%%%%%%%%%%
\section{Paths}\label{sec:paths}
In this section we prove Proposition~\ref{pathprop}
and Theorem~\ref{thm:tightpath}.
\bigskip

\noindent
{\bf Proof of Proposition~\ref{pathprop}.}
Let  $H$ be a graph satisfying $H\to P_n$ and $|E(H)| = O(n)$ (\textit{cf.}~\eqref{eq:beck}). 
We construct a $k$-graph $\mathcal{H}$ as follows. Replace every vertex $v\in V(H)$ by an $\ell$-tuple $\{v_1,v_2,\dots,v_{\ell}\}$ (different for every $v$) and each $e=\{v,w\} \in E(H)$ by 
\[
\{v_1,\dots,v_{\ell},w_1,\dots,w_{\ell}, x_1,\dots,x_{k-2\ell}\},
\]
where $x_1,\dots,x_{k-2\ell}$ are different for every edge~$e$, too.
Thus, $\mathcal{H}$ is a $k$-graph with $|V(\mathcal{H})| = \ell |V(H)| + (k-2\ell)|E(H)|$ and $|E(\mathcal{H})| = |E(H)|$. Now color $E(\mathcal{H})$. This coloring (uniquely) defines a coloring of $E(H)$. Since $H$ contains a monochromatic copy of $P_n$, $\mathcal{H}$ also contains a monochromatic copy of $\mathcal{P}_{n,\ell}^{(k)}$. Consequently, $\mathcal{H} \to \mathcal{P}_{n,\ell}^{(k)}$ and the proof is complete.  \qed
\bigskip

We now turn to the main result of this section which we restate for convenience.

\begin{thm2} {\bf \ref{thm:tightpath}}
  \label{thm:tight_path_k}
  Fix $k \geq 3$ and let $\alpha = (k-2)/(\binom{k-1}{2} + 1)$.  Then
	\[
	\eR(\mathcal{P}_{n,k-1}^{(k)}) = O(n^{k-1 - \alpha}(\log{n})^{1 + \alpha}).
	\]
\end{thm2}

First we prove an auxiliary result. In order to do it we state some necessary notation. Set
\[
\beta = \frac{1}{\binom{k-1}{2} + 1}.
\]
For a graph $G=(V,E)$ let $\mathcal{T}_{\ell}(G)$ be the set of all cliques of order $\ell$ and let $t_{\ell} = |\mathcal{T}_{\ell}(G)|$.
Let $A\subseteq V$ and $\mathcal{B}\subseteq \mathcal{T}_{k-1}(G)$ be a family of pairwise vertex-disjoint cliques. Define $x_{A,\mathcal{B}}$ as
the number of $k$-cliques of $G$ which $k-1$ vertices form a vertex set of some $B\in \mathcal{B}$ and the remaining vertex is from $V \setminus (A \cup \bigcup_{B\in \mathcal{B}} V(B))$.
Similarly, let $y_{A,\mathcal{B}}$ be the number of $k$-cliques in $G$ which $k-1$ vertices form a vertex set of some $B\in \mathcal{B}$ and the remaining vertex is from $A \cup \bigcup_{B\in \mathcal{B}} V(B)$. Finally, let $z_C$ (for $C\subseteq V$) be the number of $k$-cliques containing at least one vertex from $C$.

\begin{prop}\label{prop:auxG_k}
Let $k \geq 3$ be an integer and let $c = \frac{1}{3^{3k}}$. Then there exists a graph $G=(V,E)$ of order $n$ (for sufficiently large $n$) satisfying the following: 
  \begin{enumerate}[(i)]
\item   For every $A \subseteq V$, $|A| \le cn$, and every $\mathcal{B} \subseteq \mathcal{T}_{k-1}(G)$, $|\mathcal{B}|= cn$, vertex disjoint $(k-1)$-cliques such that $A \cap \bigcup_{B\in \mathcal{B}} V(B) = \emptyset$ we have
\[
y_{A,\mathcal{B}} \leq \frac{1}{k+1}x_{A,\mathcal{B}}.
\]
 \label{property:ik}

\item For every $C\subseteq V$, $|C| \le (k-1)cn$,
\[
z_C \le \frac{t_k}{4k}.
\]
\label{property:iiik} 

\item The total number of $k$-cliques satisfies
\[
t_k \le \nu n^{k-1-\alpha}(\log{n})^{1+\alpha},
\]
where $\nu = (3/2)^k \frac{d^{\binom{k}{2}}}{(k-1)(k-2)}.$
\label{property:iik}

  \end{enumerate}
\end{prop}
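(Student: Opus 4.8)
The plan is to take $G$ to be a random graph $G(n,p)$ with an appropriately chosen edge probability $p = d n^{-\beta}$, where $\beta = 1/(\binom{k-1}{2}+1)$ is the exponent already defined, and $d = d(k)$ is a large constant to be fixed. The intuition for this choice of $p$ is that it makes $(k-1)$-cliques of $G$ relatively plentiful — specifically, the expected number of $(k-1)$-cliques should be of order $n^{k-1}p^{\binom{k-1}{2}} = d^{\binom{k-1}{2}} n^{k-1-\beta\binom{k-1}{2}} = \Theta(n)$ — so that we can find $cn$ vertex-disjoint ones and still have the "extension" behavior in (i) hold. Meanwhile the expected number of $k$-cliques is of order $n^k p^{\binom{k}{2}} = d^{\binom{k}{2}} n^{k - \beta\binom{k}{2}}$, and one checks $k - \beta\binom{k}{2} = k-1-\alpha$ up to the logarithmic correction, matching the bound in~(iii). (The $\log n$ factors in~(iii) will come from needing concentration of clique counts not just in expectation but uniformly, forcing a slightly denser graph or a union-bound argument.)

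The key steps, in order, would be: (1) Fix $p = dn^{-\beta}$ and compute the expectations $\mathbb{E}(t_{k-1})$, $\mathbb{E}(t_k)$, and $\mathbb{E}(x_{A,\mathcal{B}})$, $\mathbb{E}(y_{A,\mathcal{B}})$ for fixed $A, \mathcal{B}$. The crucial point for~(i) is that $x_{A,\mathcal{B}}$ counts extensions of the cliques in $\mathcal{B}$ by vertices outside $A \cup \bigcup V(B)$ — a set of size $n - O(n)$, so $\geq (1-(k)cn/n)n$ choices — while $y_{A,\mathcal{B}}$ counts extensions using the $O(cn)$ "used" vertices. Since $c = 3^{-3k}$ is tiny, in expectation $y_{A,\mathcal{B}}$ is already smaller than $x_{A,\mathcal{B}}$ by a factor much better than $1/(k+1)$. (2) Prove concentration: show that with high probability, simultaneously for \emph{all} admissible pairs $(A,\mathcal{B})$, the inequality $y_{A,\mathcal{B}} \le \frac{1}{k+1} x_{A,\mathcal{B}}$ holds, and simultaneously for all $C$ with $|C| \le (k-1)cn$ the bound $z_C \le t_k/(4k)$ holds. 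For~(iii), show $t_k$ is concentrated around its mean. (3) Take a union bound over the $\le \binom{n}{cn}\binom{t_{k-1}}{cn}$ choices of $(A,\mathcal{B})$ (and the $\binom{n}{(k-1)cn}$ choices of $C$); this is where the probability of the bad events must beat $\exp(-\Theta(n\log n))$, which is precisely what dictates the $(\log n)^{1+\alpha}$ slack — we take $p$ slightly above the threshold, $p = d(n^{-1}\log n)^{\beta}$ roughly, so that clique counts are large enough (of order $n^{1+o(1)}$ for $(k-1)$-cliques) for Chernoff/Janson-type bounds to give the needed $\exp(-n\log n)$ decay.

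The main obstacle will be Step (2)–(3): establishing concentration of $x_{A,\mathcal{B}}$ and $y_{A,\mathcal{B}}$ strong enough to survive the union bound over exponentially many pairs $(A,\mathcal{B})$. The counts $x_{A,\mathcal{B}}, y_{A,\mathcal{B}}$ are sums of indicator variables for $k$-cliques that are far from independent (two $k$-cliques sharing the same base $(k-1)$-clique are highly correlated), so a naive second-moment bound will not suffice. I expect to use a concentration inequality for low-degree polynomials of independent Bernoullis — either Janson's inequality or the Kim–Vu polynomial concentration / Spencer-type martingale bounds — applied to the clique-count polynomial, and to be somewhat careful that the "bad" configurations (e.g., an unusually high-degree vertex or an unusually dense small subset, which could inflate $y_{A,\mathcal{B}}$) occur with probability small enough to absorb the $\binom{t_{k-1}}{cn}$ factor. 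The choice $c = 3^{-3k}$ and the precise constant $\nu$ are engineered so that, after fixing $d$ large, all these estimates go through with room to spare; verifying property~(ii) should then be comparatively routine since $z_C$ is dominated by $|C|$ times a typical vertex-degree-in-$k$-cliques, which is $O(t_k \cdot k / n) \cdot |C| = O((k-1)c \cdot k) t_k \ll t_k/(4k)$ for $c$ small.
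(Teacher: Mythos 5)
Your plan---random $G(n,p)$ with $p$ tuned so that $(k-1)$-cliques are plentiful but $k$-cliques are rare, then a union bound over $(A,\mathcal{B})$ pairs---is exactly the paper's, and you correctly guess (after initially omitting it) that $p = d(\log n/n)^{\beta}$ with $\beta = 1/(\binom{k-1}{2}+1)$ is needed so that concentration survives the $e^{\Theta(n\log n)}$ union bound. However, the step you yourself flag as ``the main obstacle'' is resolved in the paper by two elementary observations that you miss, and your proposed substitute (Janson / Kim--Vu polynomial concentration) has a real gap.

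First, for $x_{A,\mathcal{B}}$ and $y_{A,\mathcal{B}}$ you worry that ``two $k$-cliques sharing the same base $(k-1)$-clique are highly correlated.'' But the only shared randomness for the extensions $B\cup\{v\}$ and $B\cup\{v'\}$ is the $\binom{k-1}{2}$ edges inside $B$, and since we only ever evaluate $x_{A,\mathcal{B}}, y_{A,\mathcal{B}}$ when $\mathcal{B}\subseteq\mathcal{T}_{k-1}(G)$, we may condition on those internal edges. After conditioning, and because the members of $\mathcal{B}$ are \emph{vertex-disjoint}, the events ``$B\cup\{v\}$ is a $k$-clique'' for the pairs $(B,v)$ depend on pairwise disjoint sets of potential edges (the $k-1$ edges from $v$ into $B$), and hence are mutually independent. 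So $x_{A,\mathcal{B}}\sim\bin{cn(n-kcn),\,p^{k-1}}$ and $y_{A,\mathcal{B}}\sim\bin{kc^2n^2,\,p^{k-1}}$ exactly, and plain Chernoff gives the $e^{-\Theta(n^{2-(k-1)\beta}(\log n)^{(k-1)\beta})}$ decay needed to beat $n^{kcn}$ choices. No Janson, Kim--Vu, or martingale machinery is needed or even helpful here; those tools are strictly weaker than observing binomiality.

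Second, for properties (ii) and (iii) the relevant quantity is $\deg_k(v)$, the number of $k$-cliques through a vertex, i.e.\ the number of $(k-1)$-cliques inside the neighborhood $N(v)$. This \emph{is} a genuinely correlated subgraph count, and upper-tail estimates for such counts via Janson or Kim--Vu are notoriously delicate (Janson's inequality gives the lower tail; the upper tail for clique counts is a separate, hard problem, and Kim--Vu's polynomial bound would have to be checked carefully to beat the union bound over all $v$ and over all candidate neighborhoods). The paper instead decomposes the family of $(k-1)$-subsets of $N(v)$ into $m = O(s^{k-3})$ \emph{linear} $(k-1)$-uniform hypergraphs $\mathcal{S}_1,\dots,\mathcal{S}_m$ (a consequence of the Pippenger--Spencer / Frankl--R\"odl covering theorem). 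Linearity means any two $(k-1)$-sets in a fixed $\mathcal{S}_i$ share at most one vertex, so the corresponding clique-indicator variables depend on disjoint edge sets and each $s_i\sim\bin{|\mathcal{S}_i|,\,p^{\binom{k-1}{2}}}$ is again binomial; Chernoff plus a union bound over the polynomially many $i$ then gives $\deg_k(v)=O(n^{(k-2)(1-\beta)}(\log n)^{1+\alpha})$ (and a matching lower bound), from which both (ii) and (iii) follow by summing over $v$. Without this decomposition---or an equally strong upper-tail bound for clique counts, which you would have to prove---your proposal does not close the argument.
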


\begin{proof}
It suffices to show that the random graph $G \in \mathbb{G}(n,p)$ with $p = d(\log n /n)^\beta$ and $d=3000$ satisfies \aas\footnote{An event $E_n$ occurs
\emph{asymptotically almost surely}, or \aas\ for brevity, if
$\lim_{n\rightarrow\infty}\PR{E_n}=1$.}
\eqref{property:ik} - \eqref{property:iik}.

Below we will use the following bounds on the tails of the binomial
distribution $\bin{n,p}$ (for details, see, \emph{e.g.}, \cite{JanLucRuc00}):
\begin{align}
&\Pr(\bin{n,p} \le (1-\gamma)\EX{X})\leq \exp\left( -\frac{\gamma^2}{2}\EX{X}
\right), \label{chernoff_lower_k}\\
&\Pr(\bin{n,p} \ge (1+\gamma)\EX{X})\leq \exp\left( -\frac{\gamma^2}{3}\EX{X}
\right). \label{chernoff_upper_k}
\end{align}

First we show that $G$ \aas\ satisfies \eqref{property:ik}. Fix an $A \subseteq
V$ and $\mathcal{B}\subseteq \mathcal{T}_{k-1}$ with $|\mathcal{B}|=cn$.
Observe that without loss of generality we may assume that $|A|=cn$. Note
that $x_{A,\mathcal{B}}\sim\bin{cn (n -cn -(k-1)cn), p^{k-1}}$. Thus, 
  \[
  \EX{x_{A,\mathcal{B}}}  = c(1-kc)n^2p^{k-1} = d^{k-1}c(1-kc)n^{2-(k-1)\beta}(\log{n})^{(k-1)\beta}
  \]
and \eqref{chernoff_lower_k} (applied with $\gamma=1/2$)  implies
\begin{align} \label{eq:xab_k}
\PR{x_{A,\mathcal{B}} \leq \frac{\EX{x_{A,\mathcal{B}}}}{2}} 
&\leq \exp\left(-\frac{1}{8}\EX{x_{A,\mathcal{B}}}\right)\notag \\
&= \exp\left(-\frac{d^{k-1}}{8}c(1-kc)n^{2-(k-1)\beta}(\log{n})^{(k-1)\beta} \right).
\end{align}
Now we bound from above the number of all possible choices for $A$ and $\mathcal{B}$.
Clearly we have at most $n^{cn}$ choices for $A$. Observe that the number of
choices for $\mathcal{B}$ can be bounded from above by the number of ways of choosing an ordered subset of
vertices of size $(k-1)cn$. Indeed, suppose that   $v_1, \dots, v_{(k-1)cn}$ is
such a choice. Then $\mathcal{B}$ can be defined as $\left\{\{v_1,\dots,v_{k-1}\}, \{v_{k},\dots,v_{2k-2}\},
\dots, \{v_{(k-1)cn - k+1},\dots,v_{(k-1)cn}\}\right\}$. Thus we conclude that there are
at most $n^{kcn}$ ways to choose $A$ and $\mathcal{B}$. Hence, by \eqref{eq:xab_k}
\begin{align} \label{eq:xAB_k}
 \PR{\bigcup_{A,\mathcal{B}} \biggl\{ x_{A,\mathcal{B}} \leq \frac{\EX{x_{A,\mathcal{B}}}}{2} \biggr\} }\notag 
 &\le n^{kcn}\PR{x_{A,\mathcal{B}} \leq \frac{\EX{x_{A,\mathcal{B}}}}{2}} \\ 
 &\le \exp\left( kcn\log{n} - \frac{d^{k-1}}{8}c(1-kc)n^{2-(k-1)\beta}(\log{n})^{(k-1)\beta} \right) \notag \\
 &= o(1).
\end{align}
  
Similarly, since $y_{A,\mathcal{B}} \sim \bin{cn \cdot kcn, p^{k-1}}$,
\[
\EX{y_{A,\mathcal{B}}} = kc^2n^2 p^{k-1} = d^{k-1}kc^2 n^{2-(k-1)\beta} (\log n)^{(k-1)\beta}.
\]
and since $c = \frac{1}{3^{3k}} \le \frac{1}{k(3k+4)}$,
\begin{align*}
\frac{\EX{x_{A,\mathcal{B}}}}{2(k+1)} &= \frac{c(1-kc)}{2(k+1)}d^{k-1} n^{2-(k-1)\beta} (\log n)^{(k-1)\beta} \\
&\ge \frac{3}{2} d^{k-1} kc^2 n^{2-(k-1)\beta} (\log n)^{(k-1)\beta}\\ 
&= \frac{3}{2}\EX{y_{A,\mathcal{B}}}.
\end{align*}
Inequality \eqref{chernoff_upper_k} (applied with $\gamma=1/2$) yields
\[
\PR{y_{A,\mathcal{B}} \geq \frac{\EX{x_{A,\mathcal{B}}}}{2(k+1)}} \leq
\PR{y_{A,\mathcal{B}} \geq \frac{3}{2}\EX{y_{A,\mathcal{B}}}} \leq
\exp\left(-\frac{1}{12}\EX{y_{A,\mathcal{B}}}\right).
\]
Therefore, we deduce that
\begin{align} \label{eq:yAB_k}
 \PR{\bigcup_{A,\mathcal{B}} \biggl\{ y_{A,\mathcal{B}} \geq \frac{\EX{x_{A,\mathcal{B}}}}{2(k+1)} \biggr\} }
 \le  n^{kcn} \exp\left(-\frac{1}{12}\EX{y_{A,\mathcal{B}}}\right) = o(1).
\end{align}

Consequently, by \eqref{eq:xAB_k} and \eqref{eq:yAB_k} we get that \aas
\[
y_{A,\mathcal{B}} \le \frac{\EX{x_{A,\mathcal{B}}}}{2(k+1)} \le \frac{x_{A,\mathcal{B}}}{k+1}
\]
for any choice of $A$ and $\mathcal{B}$. This finishes the proof of~\eqref{property:ik}.

For each vertex $v \in V$, let $\deg_k(v)$ denote the number of $k$-cliques of
$G$ which contain~$v$. In order to show that \aas\ $G$ also satisfies
 \eqref{property:iiik}, we will first estimate $\deg_k(v)$
for each $v \in V$.

The standard application of \eqref{chernoff_upper_k}
(applied with $\bin{n-1,p}$ and $\gamma=1/2$) with the union bound imply that
\aas\ the degree of every vertex $v\in V(G)$ satisfies
\[
\deg(v) \le  \frac{3}{2}d n^{1-\beta} (\log n)^\beta.
\]

The number of $k$-cliques which contain $v$ is equal to the number of
$(k-1)$-cliques in the neighborhood of $v$.  Therefore, in order to show~\eqref{property:iiik} it suffices to bound
the number of $(k-1)$-cliques in any set of size at most 
$\frac{3}{2}d n^{1-\beta} (\log n)^\beta$.

Let $S\subseteq V$ with $s = |S| = \frac{3}{2}dn^{1-\beta} (\log n)^\beta$.  
First we will decompose all $(k-1)$-tuples of $S$ into linear $(k-1)$-uniform hypergraphs
$\mathcal{S}_1, \mathcal{S}_2, \dots, \mathcal{S}_m$ with 
\[
m = (1+o(1))\binom{s}{k-1} \binom{k-1}{2}/\binom{s}{2}
\]
and
\[
|\mathcal{S}_i| = (1+o(1)) \frac{\binom{s}{2}}{\binom{k-1}{2}}
\]
for every $1\le i\le m$.
That means that each $(k-1)$-tuple of $S$ belongs to exactly one $\mathcal{S}_i$ and each pair of elements of $S$ appears in at most one $(k-1)$-tuple in $ \mathcal{S}_i$. The existence of such a decomposition follows from a more general result of Pippenger and Spencer~\cite{PS} (see also \cite{FR}).

Let $s_i$ be the random variable that counts the
number of $(k-1)$-tuples of $\mathcal{S}_i$ which appear as $(k-1)$-cliques of $G$.  Observe that
$s_i \sim
\bin{|\mathcal{S}_i|,p^{\binom{k-1}{2}}}$. Therefore for each $i$,
\begin{align*}
\EX{s_i} &= (1+o(1)) \frac{\binom{s}{2}}{\binom{k-1}{2}} p^{\binom{k-1}{2}} \\
&= (1+o(1))\frac{s^2}{(k-1)(k-2)} p^{\binom{k-1}{2}} \\
&= (1+o(1))\frac{9}{4(k-1)(k-2)}d^{2+\binom{k-1}{2}} n^{1-\beta}(\log{n})^{1+\beta}
\end{align*}
and by \eqref{chernoff_upper_k} (with $\gamma = 1/2$)
\[
\PR{s_i \geq \frac{3}{2}\EX{s_i}} 
\le \exp\left( -\frac{1}{12} \EX{s_i}  \right)
\le \exp\left( -\frac{3}{16k^2}d^{2+\binom{k-1}{2}} n^{1-\beta}(\log{n})^{1+\beta} \right).
\]
Consequently, the union bound over all subsets $S\subseteq V$ of size $s$ and over all $i$ for each $1\le i\le m$ implies 
\begin{align*}
\PR{\bigcup_{S,\,i} \biggl\{ s_i \geq \frac{3}{2}\EX{s_i} \biggr\}} 
&\le \binom{n}{s} \cdot m \cdot \exp\left( -\frac{3}{16k^2}d^{2+\binom{k-1}{2}} n^{1-\beta}(\log{n})^{1+\beta} \right)\\
&\le n^s \cdot s^{k-3} \cdot \exp\left( -\frac{3}{16k^2}d^{2+\binom{k-1}{2}} n^{1-\beta}(\log{n})^{1+\beta} \right)\\
&= s^{k-3} \cdot \exp\left(s\log n -\frac{3}{16k^2}d^{2+\binom{k-1}{2}} n^{1-\beta}(\log{n})^{1+\beta} \right)\\
&= s^{k-3} \cdot \exp\left(n^{1-\beta}(\log{n})^{1+\beta} \left(\frac{3}{2}d -\frac{3}{16k^2}d^{2+\binom{k-1}{2}} \right)  \right)\\
&=o(1),
\end{align*}
since $s^{k-3}$ grows like a polynomial in $n$. Therefore it follows that \aas
\begin{equation}\label{eq:degk}
    \deg_k(v) = \sum_{i=1}^m s_i 
    \le m \cdot \frac{3}{2}\EX{s_i}
    \le s^{k-3} \cdot \frac{3}{2}\EX{s_i}
    = \nu n^{(k-2)(1 - \beta)}(\log{n})^{1+\alpha},
\end{equation}
where
\begin{align}\label{eq:gamma}
\nu = \left( \frac{3}{2} \right)^k \frac{d^{\binom{k}{2}}}{(k-1)(k-2)}.
\end{align}
In a similar way one can show that 
\[
\deg_k(v) \ge \lambda n^{(k-2)(1 - \beta)}(\log{n})^{1+\alpha},
\]
where
\begin{align}\label{eq:lambda}
\lambda = \left( \frac{1}{2} \right)^{k-1} \frac{d^{\binom{k}{2}}}{(k-1)(k-2)}.
\end{align}
{
	Note that equation~\eqref{eq:degk} gives the bound
	\[
	t_k \leq \nu n^{(k-2)(1 - \beta)+1}(\log{n})^{1+\alpha} = 
\nu n^{k - 1 - \alpha}(\log{n})^{1+\alpha},
	\]
	which proves part~\eqref{property:iik}.
}

Now we finish the proof of \eqref{property:iiik}. Since each $k$-clique is counted exactly $k$ times, the number of $k$-cliques is \aas\ at least
\begin{equation}\label{eq:tk}
t_k \ge \frac{n}{k} \cdot \lambda n^{(k-2)(1 - \beta)}(\log{n})^{1+\alpha} = \frac{\lambda}{k} n^{k-1 - \alpha}(\log{n})^{1 + \alpha}.
\end{equation}
It follows now from \eqref{eq:degk} and \eqref{eq:tk} that given a set $C \subseteq V$, $|C| \le (k-1)cn$, the number of $k$-cliques of $G$ which intersect $C$ is \aas\ at most 
\[
z_C \le (k-1)cn \cdot \nu n^{(k-2)(1 - \beta)}(\log{n})^{1+\alpha} 
=  \frac{c(k-1)k\nu}{\lambda} \cdot  \frac{\lambda}{k} n^{k-1 - \alpha}(\log{n})^{1 + \alpha} 
\le \frac{c(k-1)k\nu}{\lambda} t_k.
\]
Finally observe that \eqref{eq:gamma}, \eqref{eq:lambda} together with the choice of $c$ yield that
\[
\frac{c(k-1)k\nu}{\lambda} \le \frac{1}{4k}
\]
implying condition~\eqref{property:iiik}, as required.
\end{proof}

Now we are ready to prove main result of this section.

\bigskip

\noindent
{\bf Proof of Theorem~\ref{thm:tightpath}}. We show that there exists  a $k$-graph $\mathcal{H}$ with\linebreak $|\mathcal{H}| =O(n^{k-1-\alpha}(\log{n})^{1+\alpha})$ such that any two-coloring of the edges of
$\mathcal{H}$ yields a monochromatic copy of $\mathcal{P}_{n,k-1}^{(k)}$.  

Let $G$ be a graph from Proposition~\ref{prop:auxG_k}. Set $V(\mathcal{H}) = V(G)$ 
and let $E(\mathcal{H})$ be the set of $k$-cliques in $G$.  
We prove that such $\mathcal{H}$ is a Ramsey $k$-graph for $\mathcal{P}_{m,k-1}^{(k)}$ with $m=cn$, where $c = \frac{1}{3^{3k}}$.

Take an arbitrary red-blue coloring of the edges of $\mathcal{H}_0 = \mathcal{H}$ and assume that there is no monochromatic $\mathcal{P}_{m,k-1}^{(k)}$.  We will consider the following greedy \emph{procedure} which at each step finds a blue tight path of length $i$ labeled as $v_1,v_2,\dots, v_i$. 

\begin{enumerate}[(1)]

\item Let $\mathcal{B} = \emptyset$ be the \emph{trash} set of $(k-1)$-tuples and $U = V(\mathcal{H})$ be the set of \emph{unused} vertices and set $i:=0$. At any point in the process, if $|\mathcal{B}|=m$, then stop.

\item\label{step:start_k} (In this step $i=0$.) If possible, then choose a blue edge from $U$ and label its vertices by $v_1,\dots,v_k$ and then set $i:=k$. Otherwise, if not possible, stop.

\item\label{step:adding_k} (In this step $i\ge k$.) Let $v_{i-k+1}, \dots, v_{i-1}, v_{i}$ be the labels of the last $k-1$ vertices of the constructed
blue path. If possible, select a vertex $u\in U$ for which
$v_{i-k+1}, \dots, v_{i-1}, v_{i},u$ form a blue edge.  Label $u$ as $v_{i+1}$,
set $U := U\setminus\{u\}$ and $i:=i+1$. Repeat this step until no such $u$ can be found.

\item (In this step also $i\ge k$.) Let $v_{i-k+1}, \dots, v_{i-1}, v_{i}$ be the labels of the last $k-1$ vertices of the constructed
blue path which cannot be extended in a sense described in step \eqref{step:adding_k}. Remove these $k-1$ vertices from the path and set $\mathcal{B} := \mathcal{B} \cup
  \{\{v_{i-k+1}, \dots, v_{i-1}, v_{i}\}\}$ and $i:= i-k+1$. 
  After this removal there
  are two possibilities:
  \begin{enumerate}[(i)]
  \item if $i<k$, then put back $v_1,\dots,v_i$ to $U$ (i.e. $U:=U\cup\{v_1,\dots,v_i\}$), set $i:=0$, and
    return to step~\eqref{step:start_k};
  \item
    otherwise, return to step~\eqref{step:adding_k}.\\
  \end{enumerate}
\end{enumerate}
\noindent
This procedure will terminate under two circumstances: either $|\mathcal{B}|=m$ or no blue edge can be found in step~\eqref{step:start_k}.

First let us consider the case when $|\mathcal{B}|=m$, that means, there are  $m$ vertex disjoint $(k-1)$-tuples
in $\mathcal{B}$. Denote by $A$ the vertex set of the blue path which was obtained when $|\mathcal{B}|=m$.
Clearly, $|A|<m$, otherwise there would be a blue~$\mathcal{P}_{m,k-1}^{(k)}$. We are going to apply Proposition~\ref{prop:auxG_k}
with sets $A$ and $\mathcal{B}$.
Notice that every edge of $\mathcal{H}$ which contains a $(k-1)$-tuple
from $\mathcal{B}$ and the remaining vertex from $V(\mathcal{H}) \setminus (A \cup \bigcup_{B\in \mathcal{B}} B)$ must
be colored red. (This is because for a $(k-1)$-tuple to end up in
$\mathcal{B}$, there must have been no vertex $u$ in step~\eqref{step:adding_k} that could extend the blue path.) It also follows from step \eqref{step:adding_k} that each $(k-1)$-tuple in $\mathcal{B}$ is contained in at least one blue edge.
Thus, Proposition~\ref{prop:auxG_k} \eqref{property:ik} implies that 
$y_{A,\mathcal{B}} \leq \frac{1}{k+1}x_{A,\mathcal{B}}$. That means that
the number of red edges which contain a $(k-1)$-tuple from $\mathcal{B}$ and the remaining vertex from $U$ is at least 
$k+1$ times the number of blue edges with a $(k-1)$-tuple from $\mathcal{B}$.

Now remove all the blue edges from $\mathcal{H}$ which contain a $(k-1)$-tuple from $\mathcal{B}$ and denote such $k$-graph by $\mathcal{H}_1$.
Perform the above procedure on $\mathcal{H}_1$.
This will generate a new trash set $\mathcal{B}_1$.  Observe that $\mathcal{B}_1 \cap \mathcal{B} = \emptyset$, since 
every edge of $\mathcal{H}_1$ which contains a $(k-1)$-tuple from $\mathcal{B}$ must be red.
Again, if $|\mathcal{B}_1| = m$, then we use the same argument as above to find
that the number of red edges in $\mathcal{H}_1$ which contain a $(k-1)$-tuple from $\mathcal{B}_1$ and the remaining vertex from $U$ is at least $k+1$ times the number of blue edges in $\mathcal{H}_1$ with a $(k-1)$-tuple from $\mathcal{B}_1$.
Indeed, we can again apply the inequality from Proposition \eqref{property:ik}. This is because $y_{A,\mathcal{B}_1}$ is smaller than the number of all blue edges in $\mathcal{H}$ containing a $(k-1)$-tuple from $\mathcal{B}_1$, while (since we do not remove red edges) $x_{A,\mathcal{B}_1}$ remains same in both $\mathcal{H}_1$ and $\mathcal{H}$.
Now remove the blue edges from $\mathcal{H}_1$ which contain a $(k-1)$-tuple from $\mathcal{B}_1$ obtaining a $k$-graph $\mathcal{H}_2$. Keep repeating the procedure until it is no longer possible.

At some point, we will run out of blue edges in $\mathcal{H}_j$ for some
$j \geq 1$, and the procedure will terminate prematurely in step~\eqref{step:start_k}.  In this case $|\mathcal{B}_j| <
m$, $|A| = 0$ and $U$ has no blue edges.  However, there still may be
some blue edges which contain a vertex from $\bigcup_{B\in \mathcal{B}_j}V(B)$.
Proposition~\ref{prop:auxG_k}
\eqref{property:iiik} (applied for $C=\bigcup_{B\in \mathcal{B}_j}V(B)$) implies that the number of such
edges is at most 
\[
z_C \le \frac{t_k}{4k}.
\]

Let $x_{A,\mathcal{B}}^i$ and $y_{A,\mathcal{B}}^i$ be the numbers corresponding  to $x_{A,\mathcal{B}}$ and $y_{A,\mathcal{B}}$ obtained at the end of the procedure applied to $\mathcal{H}_i$. Thus, 
\[
y_{A,\mathcal{B}}^i \leq \frac{1}{k+1}x_{A,\mathcal{B}}^i
\]
for each $0\le i\le j-1$. 

Let $t_R$ and $t_B$
denote the number of red and blue edges in $\mathcal{H}$.
Observe that 
\begin{equation}\label{eq:tB}
t_B \le \sum_{0\le i \le j-1} y_{A,\mathcal{B}}^i + z_C
\le \frac{1}{k+1} \sum_{0\le i \le j-1} x_{A,\mathcal{B}}^i  + \frac{t_k}{4k}.
\end{equation}
Furthermore, since all sets $\mathcal{B}_i$ are mutually disjoint,
each red edge in $\mathcal{H}$ containing a $(k-1)$-tuple from some $\mathcal{B}_i$
can be only counted at most $k$ times. Thus,
\begin{equation}\label{eq:tR}
\sum_{0\le i \le j-1} x_{A,\mathcal{B}}^i \le k\cdot t_R.
\end{equation}
Consequently, by \eqref{eq:tB} and \eqref{eq:tR}, we get
\[
t_{k} = t_R + t_B \le t_R + \frac{k}{k+1} t_R + \frac{t_k}{4k}
\]
and so 
\[
t_R \ge \frac{4k-1}{4k}\cdot \frac{k+1}{2k+1} t_k > \frac{1}{2}t_k.
\]
The conclusion is that there are more red edges than there are blue
edges in $\mathcal{H}$.  If we reverse the procedure and look for a red path
instead of a blue one, we will conclude that
there are more blue edges than red edges.  Since these two
statements contradict each other, the only way to avoid both statements
is if a monochromatic path exists.
\qed

%%%%%%%%%%%%%%%%%%%%%%%%%%%%%%%%%%%%%%%%%%%%%%%%%%%%%%
%%%%%%%%%%%%%%%%%%%%%%%%%%%%%%%%%%%%%%%%%%%%%%%%%%%%%%
%%%%%%%%%%%%%%%%%%%%%%%%%%%%%%%%%%%%%%%%%%%%%%%%%%%%%%
\section{Hypergraphs with bounded degree}\label{sec:bounded}

In this section we prove Theorem~\ref{thm:bounddegree}, which states  that  hypergraphs with bounded degree can have nonlinear size-Ramsey numbers. 

\bigskip

\noindent
{\bf Proof of Theorem~\ref{thm:bounddegree}.}
We modify an idea from  R\"odl and Szemer\'edi  \cite{RodSze2000}. For simplicity we only present a proof for $k=3$, which can easily be generalized to $k\ge 3$. The hypergraph $\mathcal{G}$ will be constructed as the vertex disjoint union of graphs $\mathcal{G}_i$ each of which is a tree with a path added on its leaves. Next we will describe the details of such construction. 

Set $c = \frac{1}{5}$. We make no effort to optimize $c$ and always assume that $n$ is sufficiently large. 

Let
\[
t = \left\lfloor  \log_2{ \left( \frac{2\log_2 n}{\log_2\log_2 n} \right) } \right\rfloor.
\]
Consider a binary 3-tree $\mathcal{B}=(V,E)$ on $1+2+4+\dots+2^t$ vertices rooted at vertex~$z$ (see Figure~\ref{fig:B}). Denote by $L(\mathcal{B})$ the set of all its leafs. Call the edge containing $z$ the \emph{root edge}. Observe that 
\begin{equation}\label{eq:V(B)}
|V(\mathcal{B})| = 1+2+4+\dots+2^t = 2^{t+1} - 1 < \log_2 n
\end{equation}
(recall that $n$ is large enough) and 
\[
|L(\mathcal{B})| = 2^t.
\]

Let $\varphi$ by an automorphism of $\mathcal{B}$. Since the root edge $e$ is the unique edge with exactly one vertex of degree 1, $\varphi(z)=z$. The other two vertices of $e$ are permuted by $\varphi$. Consequently, $\varphi$ permutes two vertices of every other edge. Hence, it is easy to observe that the order of the automorphism group of $\mathcal{B}$ satisfies
\[
|Aut(\mathcal{B})| = 2^{ 1+2+4+\dots+2^{t-1}} = 2^{2^t-1} < 2^{2^t}.
\]

\begin{figure}
%Magnification: 55%
\begin{center}
\includegraphics{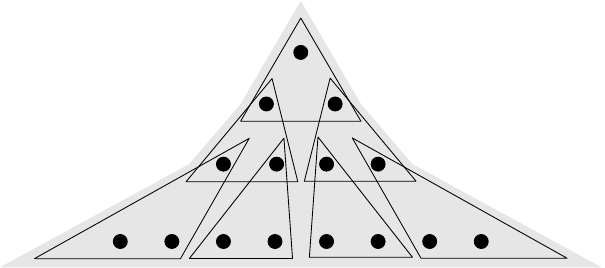}%
\end{center}
\caption{Binary 3-tree $\mathcal{B}$ on $1+2+4+8$ vertices and rooted at vertex $z$.}
\label{fig:B}
\end{figure}

Now consider a tight path $\mathcal{P}$ of length $|L(\mathcal{B})|$ placed on the leaves $L(\mathcal{B})$ in an arbitrary order. Considering labeled vertices of $L(\mathcal{B})$ there are clearly $|L(\mathcal{B})|!$ such paths. Label them by $\mathcal{P}_i$ for $i=1,2,\dots,|L(\mathcal{T})|!$. Let $\mathcal{B}_i$ be vertex disjoint copies of $\mathcal{B}$ and $\mathcal{G}_i = \mathcal{B}_i \cup \mathcal{P}_i$, where $V(\mathcal{P}_i) = L(\mathcal{B}_i)$. 

Let $\varphi$ be an isomorphism between $\mathcal{G}_i$ and $\mathcal{G}_j$. Since the only vertices of degree 4 are on paths $\mathcal{P}_i$ and $\mathcal{P}_j$, $\varphi(\mathcal{P}_i) = \mathcal{P}_j$. Thus, 
\[
\varphi(E(\mathcal{B}_i)) =  \varphi(E(\mathcal{G}_i) \setminus E(\mathcal{P}_i)) = E(\mathcal{G}_j) \setminus E(\mathcal{P}_j) = E(\mathcal{B}_j)
\] 
and so $\mathcal{B}_i$ and $\mathcal{B}_j$ are isomorphic. Thus, the number of pairwise non-isomorphic $\mathcal{G}_i$'s is at least

\[
\frac{|L(\mathcal{B})|!} {|Aut(\mathcal{B})|} 
\ge \frac{(2^t)!}{2^{ 2^t}}
\ge \frac{\left( \frac{2^t}{e} \right)^{2^t}}{2^{2^t}}
\ge \frac{\left(2^{t-2} \right)^{2^t}}{2^{2^t}}
= 2^{(t-3)2^t}
> n.
\]

Set 
\[
q = \left\lfloor \frac{n}{|V(\mathcal{B})|} \right\rfloor
\] 
and let $\mathcal{G} = \mathcal{G}_1 \cup \dots \cup \mathcal{G}_q$, where all $\mathcal{G}_1,\dots, \mathcal{G}_q$ are pairwise non-isomorphic. We show that $\mathcal{G}$ is a desired hypergraph. 

Clearly, $|V(\mathcal{G})|\le n$. Furthermore, by \eqref{eq:V(B)}, we get
\[
|V(\mathcal{G})| = q |V(\mathcal{B})| \ge \left( \frac{n}{|V(\mathcal{B})|} - 1 \right) |V(\mathcal{B})| > n-\log_2 n.
\]
Moreover, $\Delta(\mathcal{H}) = 4$ and the independence number of $\mathcal{G}$ satisfies 
\begin{equation}\label{eq:alpha}
\alpha(\mathcal{G}) \le \frac{8}{9}n.
\end{equation}
Indeed, let $I \subseteq V = V(\mathcal{G})$ be an independent set of size $\alpha = \alpha(\mathcal{G})$. We estimate the number of edges $e(I, V\setminus I)$ between sets $I$ and $V\setminus I$. First observe that 
\[
e(I, V\setminus I) \le \Delta(\mathcal{G}) \cdot |V\setminus I| \le 4 (n-\alpha).
\]
Next, since each triple between $I$ and  $V\setminus I$  intersects one of the partition classes on 2 vertices and $\delta(\mathcal{G})=1$,
\[
e(I, V\setminus I) \ge \frac{\delta(\mathcal{G}) \cdot |I|}{2} = \frac{\alpha}{2}.
\]
This implies that 
\[
\frac{\alpha}{2} \le 4 (n-\alpha)
\]
and so \eqref{eq:alpha}.

Now we are ready to finish the proof and show that for any 3-graph with
\[
|E(\mathcal{H})| \le \frac{1}{30} n (\log_2 n)^{\frac{1}{5}}
\]
we have $\mathcal{H}\nrightarrow \mathcal{G}$.

Set $d = (\log_2 n)^{\frac{1}{5}}$ and define $V_{high} \subseteq V(\mathcal{H})$ as
\[
V_{high} = \{ v\in V(\mathcal{H}) : \deg(v) \ge  d\}
\]
and
\[
V_{low} = V(\mathcal{H}) \setminus V_{high}.
\]
Clearly, $|V_{high}| \le \frac{n}{10}$; for otherwise, $|E(\mathcal{H})| > \frac{n}{10} \cdot d \cdot \frac{1}{3} \ge |E(\mathcal{H})|$, a contradiction.

Recall that $\mathcal{G}$ consists of $q$ pairwise non-isomorphic copies of $\mathcal{G}_i$. We estimate the number of copies of $\mathcal{G}_i$'s contained in a sub-hypergraph induced by $V_{low}$. First fix an edge $e$ in $V_{low}[\mathcal{H}]$ and count the number of copies of $\mathcal{G}_i$'s for which $e$ is a root edge. Since $\deg(v) \le d$ for each $v\in V_{low}$, we get that this number is at most
\[
3 \cdot d^{2+4+\dots+2^{t-1}} \cdot d^{2^t} 
\le d^{2\cdot 2^t}
\le (\log_2 n)^{\frac{1}{5} \cdot 2\cdot \frac{2 \log_2 n}{\log_2 \log_2 n}}
= n^{\frac{4}{5}},
\]
where the factor 3 counts the number of choices for the root vertex,  the next factors count the number of possible $\mathcal{B}_i$'s with $e$ as a root, and the last factor counts the number of paths on the set of leafs. Thus, there is an $i_0$ such that $\mathcal{G}_{i_0}$ appears in $V_{low}[\mathcal{H}]$ at most
\[
\frac{n^{\frac{4}{5}} \cdot |E(\mathcal{H})|}{q}
< \frac{n^{\frac{4}{5}} \cdot n (\log_2 n)^{\frac{1}{5}}}{\frac{n}{\log_2 n}}
= n^{\frac{4}{5}} (\log_2 n)^{\frac{6}{5}}
\]
times. 

Denote by $\mathcal{F}$ the sub-hypergraph consisting of root edges from all copies of $\mathcal{G}_{i_0}$ in $V_{low}[\mathcal{H}]$. Thus, 
\[
|V(\mathcal{F})| \le 3n^{\frac{4}{5}} (\log_2 n)^{\frac{6}{5}}.
\]
Color edges in $\mathcal{F}$ together with edges incident to $V_{high}$ blue; otherwise red. Clearly, there is no red copy of $\mathcal{G}$, since there is no red copy of $\mathcal{G}_{i_0}$. Moreover, there is no blue copy of $\mathcal{G}$, since every blue sub-hypergraph of order $|V(\mathcal{G})|$ has an independent set of size at least 
\[
|V(\mathcal{G})| - |V_{high}| - |V(\mathcal{F})|
>  (n-\log_2 n) - \frac{n}{10} - 3n^{\frac{4}{5}} (\log_2 n)^{\frac{6}{5}} = \frac{9}{10}n- \log_2 n - 3n^{\frac{4}{5}} (\log_2 n)^{\frac{6}{5}},
\]
which is strictly bigger than $\alpha(\mathcal{G})$ (\emph{cf.} \eqref{eq:alpha}).
\qed

%%%%%%%%%%%%%%%%%%%%%%%%%%%%%%%%%%%%%%%%%%%%%%%%%%%%%%%%%%%%
%%%%%%%%%%%%%%%%%%%%  BIBLIOGRAPHY  %%%%%%%%%%%%%%%%%%%%%%%%
%%%%%%%%%%%%%%%%%%%%%%%%%%%%%%%%%%%%%%%%%%%%%%%%%%%%%%%%%%%%

\providecommand{\bysame}{\leavevmode\hbox to3em{\hrulefill}\thinspace}

\end{document}